\def\br{[\;,\;]}
\newcommand{\Ric}{{\mathrm{Ric}}}
\newcommand{\ad}{\mathrm{ad}}
\newcommand{\img}{\mathrm{Im}}
\newcommand{\vect}{\mathrm{span}}
\newcommand{\prs}{\langle\;,\;\rangle}
\newcommand{\too}{\longrightarrow}
\newcommand{\om}{\omega}
\newcommand{\metric}{\langle\;,\;\rangle}
\newcommand{\g}{\mathfrak{g}}
\newcommand{\esp}{\quad\mbox{and}\quad}
\newcommand{\G}{{\mathfrak{g}}}
\newcommand{\h}{{\mathfrak{h}}}
\newcommand{\tr}{{\mathrm{tr}}}
\newcommand{\ric}{{\mathrm{ric}}}
\newcommand{\Ri}{{\mathrm{Ric}}}
\newcommand{\B}{{\cal B}}
\newcommand{\di}{\displaystyle}
\newcommand{\al}{\alpha}
\newcommand{\e}{\epsilon}
\newcommand{\la}{\lambda}
\newcommand{\de}{\delta}
\font\bb=msbm10
\def\B{\hbox{\bb B}}
\def\R{\hbox{\bb R}}
\def\N{\hbox{\bb N}}
\def\C{\hbox{\bb C}}
\newtheorem{Def}{Definition}[section]
\newtheorem{theo}{Theorem}[section]
\newtheorem{pr}{Proposition}[section]
\newtheorem{Le}{Lemma}[section]
\newtheorem{co}{Corollary}[section]
\newtheorem{exem}{Example}
\newtheoremstyle{named}{}{}{\itshape}{}{\bfseries}{.}{.5em}{\thmnote{#3 }}
\theoremstyle{named}
\begin{document}

\begin{frontmatter}


 

\title{  Classification of Einstein Lorentzian 3-nilpotent Lie groups with 1-dimensional  nondegenerate center}

 \author[label1,label2]{ Mohamed Boucetta, Oumaima Tibssirte}
 \address[label1]{Universit\'e Cadi-Ayyad\\
 	Facult\'e des sciences et techniques\\
 	BP 549 Marrakech Maroc\\e-mail: m.boucetta@uca.ma
 }
 
 \address[label2]{Universit\'e Cadi-Ayyad\\
 	Facult\'e des sciences et techniques\\
 	BP 549 Marrakech Maroc\\e-mail: oumayma1tibssirte@gmail.com 
 }



\begin{abstract} We give a  complete classification of Einstein Lorentzian 3-nilpotent simply connected Lie groups with 1-dimensional  nondegenerate center.
\end{abstract}

\begin{keyword} Einstein Lorentzian manifolds \sep  Nilpotent Lie groups  \sep Nilpotent Lie algebras \sep 
\MSC 53C50 \sep \MSC 53D15 \sep \MSC 53B25


\end{keyword}

\end{frontmatter}







\section{Introduction} \label{section1}
The study of  left-invariant Einstein Riemannian metrics on Lie groups is a research area that had made huge progress in the last decades (see \cite{heber, lauret, lauret1}). However, the indefinite case remains unexplored in comparison and only few significant results had been published in this matter with many questions that are still open (see \cite{guediri, Dconti1, bou0}).\\
In \cite{bou0}, the authors began an inspection of Einstein Lorentzian nilpotent Lie algebras  following guidelines from previous studies of the $2$-step nilpotent case (see \cite{bou1} and \cite{guediri}).
The main Theorem of \cite{bou0} states that Einstein nilpotent Lie algebras with degenerate center are exactly Ricci-flat  and are obtained by a double  extension process starting from a Euclidean vector space (see \cite[Theorem $4.1$]{bou0} and \cite{medina} for the original definition of the double extension). This class of Lie algebras includes all Einstein Lorentzian nilpotent Lie algebras that are either $2$-step or of dimension less than $5$, in fact as a concrete application of the main Theorem, the authors were able to give a full classification of the latter.\\ Dimension $6$ however falls outside the context of this result as the authors presented the first example in this situation of an Einstein nilpotent Lie algebra with non-degenerate center, which also happens to be $3$-step nilpotent. Einstein nilpotent Lie algebras that are non Ricci-flat has been shown to exist in the Lorentzian setting (see \cite{Dconti1}) and according to \cite[Theorem $4.1$]{bou0} these must have non-degenerate center as well. So the study of Einstein Lorentzian nilpotent Lie algebras with nondegenerate center becomes a natural and challenging problem and
the present paper can be seen as a first attempt to find a general pattern for these Lie algebras. We start by  the $3$-step nilpotent case and we develop a new approach which can be used later in the general case. Let us give a brief summary of our method and state our main result. \\
Let $(\h,[\;,\;])$ be a  $k$-nilpotent Lie algebra and $\metric$ an Einstein Lorentzian metric on $\h$ such that the center of $\h$ is non-degenerate. Then $Z(\h)$ is non degenerate Euclidean (see \cite{bou0}) and, naturally, we get the orthogonal spitting 
$$\h=\mathrm{Z}(\h)\overset\perp\oplus\g.$$
The Lie bracket on $\h$ splits  accordingly as $[u,v]=\omega(u,v)+[u,v]_0$ for any $u,v\in\g$, where $[\;,\;]_0$ is a Lie bracket on $\g$ and $\omega:\g\times\g\longrightarrow\mathrm{Z}(\h)$ is a $2$-cocycle of $(\G,[\;,\;]_0)$. It turns out that $(\g,[\;,\;]_0,\metric_{|\g\times\g})$ is a Lorentzian $(k-1)$-nilpotent Lie algebra and the Einstein equation on $\h$ can be expressed entirely by means of the Lie algebra $\g$ as a sort of compatibility condition between $\omega$ and the Ricci curvature $\Ric_\g$ of $(\G,\prs_\G,[\;,\;]_0)$ (see Proposition \ref{prrici}). This shift in perspective is especially useful when the Lie algebra $\h$ is $3$-step nilpotent since $\g$ is 2-nilpotent and, for instance, we can  show that every Einstein Lorentzian $3$-step nilpotent Lie algebra with non-degenerate center has positive scalar curvature (Theorem \ref{main1}). It also gives rise to the notion of $\omega$-quasi Einstein Lie algebras (see Definition \ref{DefOmegaQuasi}). A careful study  of $\omega$-quasi Einstein 2-nilpotent Lie algebras leads to our main result, namely the classification of Einstein Lorentzian $3$-step nilpotent Lie algebras with $1$-dimensional non-degenerate center. Surprisingly enough, these are shown to only exist in dimensions $6$ and $7$.
\begin{theo}
\label{Thprin1}
Let $\h$ be a $3$-step nilpotent Lie algebra with $\dim\mathrm{Z}(\h)=1$. Let $\metric$ be a Lorentzian metric on $\h$ such that $\mathrm{Z}(\h)$ is non-degenerate, then $\metric$ is Einstein if and only if $\h$ is Ricci-flat and has one of the following forms :\\

$(i)$ $\dim\h=6$ and $\h$ is isomorphic to $\mathrm{L}_{6,19}(-1)$, i.e., $\h$ has a basis $(f_i)_{i=1}^6$ such that the non vanishing Lie brackets are
\[ [f_1,f_2]=f_4,[f_1,f_3]=f_5,[f_2,f_4]=f_6,[f_3,f_5]=-f_6 \]
and the metric  is given by :
		\begin{equation}
	\label{6dimmetric1}
	\metric:=f_1^*\otimes f_1^*+2f_2^*\otimes f_2^*+2f_3^*\otimes f_3^*+4\alpha^4f_6^*\otimes f_6^*-2\alpha^2 f_4^*\odot f_5^*,\;\;\;\alpha\neq 0.
	\end{equation}
	
	$(ii)$ $\dim \h=7$ and $\h$ is isomorphic to the nilpotent Lie algebras $147E$ found in the classification given in \cite{gong1998}(p. $57$). In precise terms, there exists a basis $\{f_i\}_{i=1}^7$ of $\h$ where the non vanishing Lie bracket are given  by :
	\begin{equation}
	\label{structdim7}
	[f_1,f_2]=f_5,\;[f_1,f_3]=f_6,\;[f_2,f_3]=f_4,\;[f_6,f_2]=(1-r)f_7,\;[f_5,f_3]=-rf_7,\;[f_4,f_1]=f_7,
	\end{equation}
	with $0<r<1$,  and the metric has the form:
	\begin{equation}
	\label{structmetric7}
	\langle\;,\;\rangle=f_1^*\otimes f_1^*+f_2^*\otimes f_2^*+f_3^*\otimes f_3^*-a f_4^*\otimes f_4^*+ar f_5^*\otimes f_5^*+a(1-r)f_6^*\otimes f_6^*+a^2 f_7^*\otimes f_7^*,\;\;\;a>0.
	\end{equation}
\end{theo}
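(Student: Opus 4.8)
The plan is to exploit the reduction described above, which turns the Einstein condition on $\h$ into a problem about the $2$-step nilpotent Lorentzian Lie algebra $\g$ together with the scalar cocycle $\omega$. Since $\dim\mathrm{Z}(\h)=1$ and $\mathrm{Z}(\h)$ is non-degenerate Euclidean, I would write $\mathrm{Z}(\h)=\R e$ with $\langle e,e\rangle=1$ and $\h=\R e\overset\perp\oplus\g$, where $(\g,[\;,\;]_0,\metric)$ is Lorentzian and $2$-step nilpotent (it is exactly $2$-step, not abelian, because $\h$ is genuinely $3$-step). As $\mathrm{Z}(\h)$ is one-dimensional, $\omega$ is $\R$-valued and is encoded by a single $\metric$-skew-symmetric endomorphism $A$ of $\g$ via $\omega(u,v)=\langle Au,v\rangle\,e$. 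The first step is to specialize Proposition \ref{prrici} to this situation and rewrite the three blocks of the Einstein equation $\Ric_\h=\lambda\metric$ (the $(e,e)$ component, the mixed components pairing $\g$ with $e$, and the $\g\times\g$ block) as relations among $\Ric_\g$, $A$ and $[\;,\;]_0$; this is precisely the assertion that $(\g,\omega)$ is $\omega$-quasi Einstein in the sense of Definition \ref{DefOmegaQuasi}.

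Next I would record the algebraic meaning of the $3$-step hypothesis. Writing $\mathfrak z=\mathrm{Z}(\g)$ and $\mathfrak v=\mathfrak z^\perp$, the $2$-step condition gives $[\g,\g]_0\subseteq\mathfrak z$, and since $e$ is central the only surviving degree-$3$ term of the lower central series of $\h$ is $\omega(\g,[\g,\g]_0)\,e$. Hence $\h$ is $3$-step \emph{if and only if} $\omega$ does not vanish on $\g\times[\g,\g]_0$, equivalently $A([\g,\g]_0)\not\perp\g$. I would then introduce the standard $j$-map description of $\g$, with $\langle j(z)x,y\rangle=\langle[x,y]_0,z\rangle$ for $z\in\mathfrak z$, express $\Ric_\g$ in its usual quadratic form in the operators $j(z)$, and thereby reduce every term appearing in the compatibility relation to the finite linear-algebra data $(j,A)$ on $\mathfrak v\oplus\mathfrak z$.

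The core of the argument is then to extract dimension bounds from the resulting system. Feeding the $j$- and $A$-descriptions into the $\omega$-quasi Einstein equations yields a collection of quadratic identities that must be solved simultaneously with the cocycle (Jacobi) condition $d\omega=0$ and the non-triviality $A([\g,\g]_0)\not\perp\g$, while Theorem \ref{main1} further restricts the admissible $(j,A)$. I expect this simultaneous normalization --- block-diagonalizing the skew operators $j(z)$ and $A$ while respecting both the Jacobi constraint and the Einstein constraint --- to be the main obstacle, since the two sets of relations couple $\Ric_\g$ (quadratic in $j$) to $A$ and force the rank of $A$ on $[\g,\g]_0$, together with $\dim\mathfrak z$ and $\dim\mathfrak v$, to be small. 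Carrying this case analysis through is what produces the \emph{a priori} surprising conclusion that solutions exist only for $\dim\h\in\{6,7\}$.

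Finally, in each surviving dimension I would choose an adapted pseudo-orthonormal basis that puts $(j,A)$, and hence the full bracket of $\h$, into canonical form; comparison with the known lists identifies $\h$ with $\mathrm{L}_{6,19}(-1)$ when $\dim\h=6$ and with $147E$ when $\dim\h=7$, and simultaneously produces the metrics (\ref{6dimmetric1}) and (\ref{structmetric7}) together with their free parameters $\alpha$ and $(a,r)$. Taking the trace of the $\g\times\g$ block of the compatibility relation then determines the Einstein constant and forces $\lambda=0$, so every solution is Ricci-flat. The converse implication is a direct verification that each pair (algebra, metric) in $(i)$ and $(ii)$ has vanishing Ricci tensor, which closes the equivalence.
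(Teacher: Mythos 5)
Your setup faithfully reproduces the paper's reduction: the orthogonal splitting $\h=\R e\overset\perp\oplus\g$, the encoding of $\om$ by a single skew-symmetric endomorphism, the translation of the Einstein equation into the $\om$-quasi Einstein condition of Definition \ref{DefOmegaQuasi}, and the description of $\g$ by its structure endomorphisms (your $j(z)$ are the paper's $J_i$). The final identification step (change of basis to reach $\mathrm{L}_{6,19}(-1)$ and $147E$) also matches. But the heart of the theorem is missing. Where you write that you ``expect this simultaneous normalization \ldots to be the main obstacle'' and that ``carrying this case analysis through is what produces the \emph{a priori} surprising conclusion that solutions exist only for $\dim\h\in\{6,7\}$,'' you are naming, not proving, exactly the part that constitutes the paper's contribution. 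In the paper this is accomplished by three nontrivial lemmas: Lemma \ref{genlem0}, which applies Weyl's eigenvalue inequalities for Hermitian matrices (Theorem \ref{genth1}) to the block equation \eqref{M} and forces all the nonpositive diagonal entries $-(2\la+\mu_i^2),-2\la$ to vanish (hence $\la=0$ and $L=0$) while bounding $\mathrm{rank}(J_i)\leq 2$ for $i\geq 2$ with additivity of smallest eigenvalues; Lemma \ref{genlem1}, which converts these rank and spectral constraints into a normal form for the family $(J_2,\ldots,J_n)$; and Lemma \ref{Lemmaimp}, whose rigidity statement for the matrix system $K^2=P^{-1}AP+A$, $\al K=AP-P^{-1}A$ is precisely what collapses the dimension to $k=1$, i.e.\ to $\dim\h\in\{6,7\}$. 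None of these ideas, nor any substitute for them, appears in your proposal, so the dimension bound and the classification are asserted rather than derived.

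There is also a concrete error in the one mechanism you do specify: you claim that ``taking the trace of the $\g\times\g$ block of the compatibility relation then determines the Einstein constant and forces $\la=0$.'' Trace arguments alone cannot do this. In the paper, tracing the quasi-Einstein system \eqref{meq1} yields only $\tr(L^2)=-4(2n+m+3)\la$; since $L$ is skew-symmetric on a Euclidean space, $\tr(L^2)\leq 0$, which is compatible with \emph{every} $\la\geq 0$ (this is exactly how Theorem \ref{main1} proves $\la\geq0$, not $\la=0$). The vanishing of $\la$ is obtained only through the eigenvalue-interlacing argument of Lemma \ref{genlem0}, i.e.\ through the same machinery that your proposal leaves unbuilt. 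So the gap is not a matter of omitted routine computation: both the Ricci-flatness and the restriction to dimensions $6$ and $7$ depend on analytic input (Hermitian eigenvalue inequalities and the rigidity of Lemma \ref{Lemmaimp}) that your outline neither provides nor replaces.
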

\paragraph{Outline of the paper}  In Section $2$ we give some preliminaries on Pseudo-Riemannian Lie algebras as well as all the notations needed for subsequent development. In Section $3$, we describe an Einstein Lorentzian nilpotent Lie algebra $\h$ with non-degenerate center by means of its center, a nilpotent Lorentzian Lie algebra $\g$ of lower order, and a $2$-cocycle $\omega\in\mathrm{Z}^2(\g,\mathrm{Z}(\h))$, these are called \emph{the attributes} of $\h$ (see Definition \ref{DefAttri}). The main result of this section is Theorem \ref{main1} in which we prove that any Einstein Lorentzian $3$-step nilpotent Lie algebra of non-degenerate center has positive scalar curvature, at the end of the section we introduce the notion of \emph{$\omega$-quasi Einstein Lie algebra}. The remainder of the document is devoted for the proof of the central results. As the reader can see, the proof of Theorem \ref{Thprin1}  turns out to be difficult  and it is based on a sequence of Lemmas (Lemma \ref{genlem0}, \ref{genlem1} and \ref{Lemmaimp}). This suggests that the complete study of Einstein Lorentzian nilpotent Lie algebras with nondegenerate center is a challenging mathematical problem.
\section{Preliminaries}\label{section2}
A \emph{pseudo-Euclidean  vector space } is  a real vector space  of finite dimension $n$
endowed with  a
nondegenerate symmetric inner product  of signature $(q,n-q)=(-\ldots-,+\ldots+)$.  When
the
signature is $(0,n)$
(resp. $(1,n-1)$) the space is called \emph{Euclidean} (resp. \emph{Lorentzian}).

Let $(V,\prs)$ be a pseudo-Euclidean vector space of signature  $(q,n-q)$. A vector $u\in V$  is called \emph{spacelike} if $\langle u,u\rangle>0$, \emph{timelike} if $\langle u,u\rangle<0$ and \emph{isotropic} if  $\langle u,u\rangle=0$.
A family
$(u_1,\ldots,u_s)$ of vectors in $V$ is called \emph{orthogonal}  if, for $i,j=1,\ldots,s$
and $i\not=j$, $\langle u_i,u_j\rangle=0$. An orthonormal basis of $V$ is an orthogonal basis $(e_1,\ldots,e_n)$ such that $\langle e_i,e_i\rangle=\pm1$.
For any endomorphism $F:V\too V$, we denote by $F^*:V\too V$ its adjoint with respect to $\prs$.

It is a well-known fact that the study of the curvature of left invariant pseudo-Riemannian metrics on Lie groups reduces to the study of its restriction to their Lie algebras. Let us recall some definitions and fix some notations. The reader can consult \cite{bouc1} or \cite{bou0} for details.

Let $({\h},[\;,\;],\prs)$ be a {\it pseudo-Euclidean Lie algebra}, i.e, a Lie algebra endowed with a pseudo-Euclidean product.
The \emph{ Levi-Civita product} of ${\h}$ is the bilinear map $\mathrm{L}:{\h}\times{\h}\too{\h}$ given by  Koszul's
formula
\begin{eqnarray}\label{levicivita}2\langle
\mathrm{L}_uv,w\rangle&=&\langle[u,v],w\rangle+\langle[w,u],v\rangle+
\langle[w,v],u\rangle.\end{eqnarray}
For any $u,v\in{\h}$, $\mathrm{L}_{u}:{\h}\too{\h}$ is skew-symmetric and $[u,v]=\mathrm{L}_{u}v-\mathrm{L}_{v}u$.
The curvature of ${\h}$ is given by
$$
\label{curvature}K(u,v)=\mathrm{L}_{[u,v]}-[\mathrm{L}_{u},\mathrm{L}_{v}].
$$
The Ricci curvature $\mathrm{ric}:{\h}\times{\h}\too\R$ and its Ricci operator $\Ri:{\h}\too{\h}$ are defined by $$\langle \Ri (u),v\rangle=\mathrm{ric}(u,v)=\mathrm{tr}\left(w\too
K(u,w)v\right).$$ 
A pseudo-Euclidean Lie algebra is called \emph{flat} (resp. \emph{Ricci-flat}) if $K=0$
(resp. $\ric=0$). It is called $\la$-Einstein if there exists a constant $\la\in\R$ such that $\Ri=\la\mathrm{Id}_{\h}$.

In this paper, we deal with nilpotent Lie algebras and in this case the ricci curvature is given by
\begin{equation}\label{ricci1}
\ric(u,v)=-\frac12\tr(\ad_u\circ
\ad_v^*)-\frac14\tr(J_u\circ J_v),\end{equation}where $J_u$ is the skew-symmetric endomorphism given by $J_u(v)=\ad_v^*u$. Moreover,
if ${\mathcal J}_1$
and
${\mathcal J}_2$ denote the symmetric endomorphisms given by
\begin{equation}
\label{ops3}
\langle{\mathcal J}_1 u,v\rangle=\tr(\ad_u\circ\ad_v^*),\;
\langle{\mathcal J}_2 u,v\rangle=-\tr(J_u\circ J_v)=\tr(J_u\circ J_v^*).\end{equation}
then the Ricci operator has the following expression
\begin{equation}\label{riccinilpotent}\mathrm{Ric}=-\frac12{\mathcal
	J}_1+\frac14{\mathcal J}_2,\end{equation}
The endomorphisms  ${\mathcal J}_1$ and ${\mathcal J}_2$ can be expressed in a useful way. Indeed,
if
$(e_1,\ldots,e_n)$ is  a basis of
$[\h,\h]$, then, for any $u,v\in{\h}$, the Lie bracket can be written
\begin{equation}\label{bracket}[u,v]=\sum_{i=1}^n\langle J_iu,v\rangle
e_i,\end{equation}where $(J_1,\ldots,J_n)$ is a family of  skew-symmetric
endomorphisms with respect to $\prs$. 
This family  will be called
\emph{Lie  structure endomorphisms} associated to $(e_1,\ldots,e_n)$. 
The following proposition   will be very useful later. See \cite[Proposition 2.3]{bou0} for its proof.
\begin{pr}Let $({\h},\prs)$ be a pseudo-Euclidean   Lie algebra,
	$(e_1,\ldots,e_n)$  a basis of
	$[\h,\h]$ and $(J_1,\ldots,J_n)$ the corresponding structure endomorphisms. Then 
	\begin{equation}\label{invariant}
	{\mathcal J}_1=-\sum_{i,j=1}^n\langle e_i,e_j\rangle J_i\circ J_j\quad
	\mbox{and}\quad {\mathcal J}_2u=-\sum_{i,j=1}^n\langle e_i,u\rangle{\tr}(J_i\circ
	J_j)e_j.\end{equation}In particular, $\tr{\mathcal J}_1=\tr{\mathcal J}_2$.

\end{pr}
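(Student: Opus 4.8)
The plan is to reduce both formulas to a single structural identity relating the operators $J_u$ to the fixed family $(J_1,\ldots,J_n)$, after which everything follows by a direct trace computation. First I would establish the key identity $J_u=\sum_{i=1}^n\langle e_i,u\rangle J_i$ for every $u\in\h$. This comes straight from the definition $J_u(v)=\ad_v^*u$ together with the bracket expansion (\ref{bracket}): for any $u,v,w\in\h$,
\[
\langle J_u(v),w\rangle=\langle\ad_v^*u,w\rangle=\langle u,[v,w]\rangle=\sum_{i=1}^n\langle J_iv,w\rangle\langle e_i,u\rangle,
\]
and since $w$ is arbitrary this gives the claim. The very same computation, applied to $\langle\ad_v^*x,w\rangle=\langle x,[v,w]\rangle$, yields $\ad_v^*x=\sum_i\langle e_i,x\rangle J_iv$, which I will use repeatedly.

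For the first formula I would compute $\ad_u\circ\ad_v^*$ explicitly. Feeding the expression for $\ad_v^*x$ into $\ad_u$ and applying (\ref{bracket}) once more gives
\[
\ad_u\circ\ad_v^*(x)=\sum_{i=1}^n\langle e_i,x\rangle[u,J_iv]=\sum_{i,j=1}^n\langle e_i,x\rangle\langle J_ju,J_iv\rangle e_j.
\]
The essential point is then the trace of such an operator: any rank-one map $x\mapsto\langle e_i,x\rangle e_j$ has trace $\langle e_i,e_j\rangle$, so that $\langle\mathcal{J}_1u,v\rangle=\tr(\ad_u\circ\ad_v^*)=\sum_{i,j=1}^n\langle e_i,e_j\rangle\langle J_ju,J_iv\rangle$. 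Using skew-symmetry of each $J_i$ in the form $\langle J_ju,J_iv\rangle=-\langle J_iJ_ju,v\rangle$ rewrites this as $\langle-\sum_{i,j}\langle e_i,e_j\rangle J_iJ_ju,v\rangle$, which proves the first identity in (\ref{invariant}).

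For the second formula the computation is shorter. Substituting $J_u=\sum_i\langle e_i,u\rangle J_i$ and $J_v=\sum_j\langle e_j,v\rangle J_j$ into the definition of $\mathcal{J}_2$ gives
\[
\langle\mathcal{J}_2u,v\rangle=-\tr(J_u\circ J_v)=-\sum_{i,j=1}^n\langle e_i,u\rangle\langle e_j,v\rangle\tr(J_i\circ J_j),
\]
and since $v$ is arbitrary this is exactly $\langle-\sum_{i,j}\langle e_i,u\rangle\tr(J_i\circ J_j)e_j,v\rangle$, the second identity. For the final assertion I would take traces of both expressions: from the first, $\tr\mathcal{J}_1=-\sum_{i,j}\langle e_i,e_j\rangle\tr(J_i\circ J_j)$, while applying the rank-one trace rule $\tr\bigl(u\mapsto\langle e_i,u\rangle e_j\bigr)=\langle e_i,e_j\rangle$ to the formula for $\mathcal{J}_2$ gives $\tr\mathcal{J}_2=-\sum_{i,j}\tr(J_i\circ J_j)\langle e_i,e_j\rangle$; the two coincide, whence $\tr\mathcal{J}_1=\tr\mathcal{J}_2$.

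These computations are routine, so there is no deep obstacle; the only genuine care needed is bookkeeping at two points. The first is the trace evaluation of operators whose image lies in the (possibly degenerate) subspace $[\h,\h]=\vect(e_1,\ldots,e_n)$: one must use the basis-free rank-one trace identity rather than an orthonormal expansion, precisely because the $e_i$ are in general neither orthonormal nor a basis of all of $\h$. The second is tracking the skew-symmetry sign when sliding $J_i$ across the inner product, which is where a sign error could most easily creep in.
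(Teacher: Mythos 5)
Your proof is correct: the key identity $J_u=\sum_{i=1}^n\langle e_i,u\rangle J_i$, the basis-free rank-one trace rule $\tr\bigl(x\mapsto\langle e_i,x\rangle e_j\bigr)=\langle e_i,e_j\rangle$, and the skew-symmetry sign $\langle J_ju,J_iv\rangle=-\langle J_iJ_ju,v\rangle$ are all handled properly, and your insistence on the basis-free trace computation is exactly the right precaution, since $(e_1,\ldots,e_n)$ is an arbitrary basis of $[\h,\h]$, which may even be a degenerate subspace. Note that the paper itself gives no proof of this proposition, deferring instead to Proposition 2.3 of \cite{bou0}; your computation is the natural self-contained argument for it.
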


\section{Lorentzian nilpotent Einstein Lie algebras with nondegenerate center}\label{section3}

In \cite{bou0}, we  studied Lorentzian nilpotent Einstein Lie algebras with degenerate center and we gave the first example of a Lorentzian 3-step nilpotent Ricci-flat Lie algebra with nondegenerate center. We also showed that an Einstein Lorentzian nilpotent Lie algebra with non zero scalar curvature must have a nondegenerate center. A first example of such algebras was given in \cite{Dconti1}. A 2-step nilpotent Einstein Lorentzian Lie algebra must be Ricci-flat with degenerate center so it is natural to start by studying 3-step nilpotent Einstein Lorentzian Lie algebras with nondegenerate center which, according to  \cite[Corollary 3.1]{bou0},  must be Euclidean.

Any nilpotent Lie algebra can be obtained by Skjelbred-Sund's method, namely, by an extension from a nilpotent Lie algebra of lower dimension and a 2-cocycle with values in a vector space (see \cite{graaf}). We will adapt this method to our study.

Let $(\h,\prs_\h)$ be a Lorentzian $k$-step nilpotent Lie algebra of dimension $n$ with nondegenerate  Euclidean center $Z(\h)$ of dimension $p\geq1$.  Denote by $\prs_z$ the restriction of $\prs$ to $Z(\h)$, $\g=Z(\h)^\perp$ and by $\prs_\g$ the restriction of $\prs$ to $\g$. We get that
\[ \h=\g\stackrel{\perp}{\oplus} Z(\h), \]
where $(Z(\h),\prs_z)$ is an Euclidean vector space  and $(\g,\prs_\g)$ is a Lorentzian vector space. Moreover, for any $u,v\in\g$, we have
\[ [u,v]=[u,v]_\g+\om(u,v), \]where $[u,v]_\g\in\G$ and $\om(u,v)\in Z(\h)$.  The Jacobi identity applied to $\br$ is easily seen equivalent to
$(\g,\br_\g)$ being a Lie algebra and $\om:\G\times\G\too Z(\h)$ a 2-cocycle of $\g$ with respect to the trivial representation of $\g$ in $Z(\h)$, namely, for any $u,v,w\in\g$,
\[ \om([u,v]_\G,w)+\om([v,w]_\G,u)+\om([w,u]_\G,v)=0.\]
Moreover,  
\begin{equation}\label{rigid}Z(\g)\cap\ker\om=\{0\}\esp C^n(\h):=[C^{n-1}(\h),\h]=C^{n}(\G)+\om(C^{n-1}(\G),\G), \end{equation}for any $n\in\N$.
This implies that $(\h,\br)$ is $k$-step nilpotent if and only if $(\g,\br_\g)$ is $k-1$-step nilpotent and $C^{k-2}(\G)\nsubset\ker\om$.

\begin{Def} \label{DefAttri}Let $(\h,\br,\prs_\h)$ be a  Lorentzian nilpotent Lie algebra with nondegenerate Euclidean center. We call the triple $(\g,\prs_\g,\br_\g)$, $(Z(\h),\prs_z)$ and $\om\in Z^2(\g,Z(\h))$ the attributes of $(\h,\br,\prs_\h)$.

\end{Def}

 We proceed now to express the Ricci curvature of $\h$ in terms of its attributes	$(\g,\prs_\g,\br_\g)$, $(Z(\h),\prs_z)$ and $\om\in Z^2(\g,Z(\h))$. For any $u\in \G$, we consider $\om_u:\G\too Z(\h)$, $v\too\om(u,v)$, $\om_u^*:Z(\h)\too\G$ its transpose given by
\[ \langle\om_u^*(x),v\rangle_\g=\langle \om(u,v),x\rangle_z. \]
For any $x\in Z(\h)$, we define $S_x:\G\too\G$  by
\[ S_x(u)=\om_u^*(x). \]It is clear that $S_x$ is skew-symmetric. Recall that, for any $u\in\g$, we denote by $J_u:\G\too\G$ the skew-symmetric endomorphism given by $J_u(v)=\ad_v^*(u)$.

On the other hand, define the  endomorphism $D:\G\too\G$ by
\begin{equation}\label{D}
\langle Du,v\rangle_\G=\tr(\om_u^*\circ\om_v).
\end{equation}It is clear that $D$ is symmetric with respect to $\prs_\G$. Let $(z_1,\ldots,z_p)$ be a basis of $Z(\h)$. There exists a unique family $(S_1,\ldots,S_p)$ of skew-symmetric endomorphisms such that, for any $u,v\in\G$,
\begin{equation}\label{car} \om(u,v)=\sum_{i=1}^p\langle S_iu,v\rangle_\G z_i.
\end{equation}  This family  will be called
\emph{$\omega$-structure endomorphisms} associated to $(z_1,\ldots,z_p)$.
A direct computation using \eqref{D} and \eqref{car} shows that
\begin{equation}\label{Df}
D=-\sum_{i,j}\langle z_i,z_j\rangle_z S_i\circ S_j.
\end{equation}

This operator has an interesting property.
\begin{pr}\label{ompr} If $\om$ satisfies 
	\begin{equation}\label{om}\om(\ad_u^*v,w)+\om(v,\ad_u^*w)=0\end{equation}for any $u,v,w\in\G$, then
	$D$ is a derivation of $(\G,\br_\G)$.
	
\end{pr}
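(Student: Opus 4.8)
The plan is to verify the derivation identity $D[u,v]_\g = [Du,v]_\g + [u,Dv]_\g$ directly, after first reducing $D$ to a manageable form. Since the center $(Z(\h),\prs_z)$ is Euclidean, I would choose the basis $(z_1,\ldots,z_p)$ to be orthonormal; then \eqref{Df} collapses to $D = -\sum_{i=1}^p S_i^2$, where the $S_i$ are the $\om$-structure endomorphisms of \eqref{car}, each skew-symmetric. Consequently the derivation property $D[u,v]_\g=[Du,v]_\g+[u,Dv]_\g$ is equivalent to proving, for all $u,v\in\g$,
\begin{equation*}
\sum_{i=1}^p S_i^2[u,v]_\g = \sum_{i=1}^p\left([S_i^2 u,v]_\g + [u,S_i^2 v]_\g\right),
\end{equation*}
which I would check by pairing both sides with an arbitrary $w\in\g$ and comparing the contribution of each index $i$ separately.

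The two inputs feeding this comparison come from extracting the $z_i$-component of the two relations at hand. First, taking the $z_i$-component of \eqref{om} and using $\langle S_i v,\ad_u^* w\rangle = \langle \ad_u S_i v,w\rangle$ yields the operator identity
\begin{equation*}
\ad_u S_i = -S_i\,\ad_u^* \qquad (\text{equivalently } S_i\,\ad_u^* = -\ad_u S_i),
\end{equation*}
valid for every $u\in\g$ and each $i$. Second, taking the $z_i$-component of the $2$-cocycle identity for $\om$ and rearranging (moving each $S_i$ and each $\ad$ across the inner product) gives the purely structural relation
\begin{equation*}
S_i[u,v]_\g = \ad_v^*(S_i u) - \ad_u^*(S_i v),
\end{equation*}
again for all $u,v$ and each $i$. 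With these in hand the verification is mechanical: rewriting $S_i[u,v]_\g$ by the cocycle relation gives $\langle S_i^2[u,v]_\g,w\rangle = -\langle S_i u,[v,S_i w]_\g\rangle + \langle S_i v,[u,S_i w]_\g\rangle$, while a short manipulation of $\langle [S_i^2 u,v]_\g,w\rangle$ and $\langle[u,S_i^2 v]_\g,w\rangle$ that invokes $S_i\,\ad_v^* = -\ad_v S_i$ produces exactly these same two terms; hence the two sides coincide term by term in $i$.

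The point requiring care — and the reason both relations are genuinely needed — is that \eqref{om} only controls the composition $\ad_u S_i$ (equivalently $S_i\ad_u^*$): it moves $\ad_u$ across $S_i$ only at the cost of passing to the adjoint $\ad_u^*$, and gives no direct handle on the opposite composition $S_i\,\ad_u$ that appears in $S_i^2[u,v]_\g = S_i(S_i\ad_u v)$. The cocycle identity is precisely what supplies the missing information, by re-expressing $S_i[u,v]_\g$ through $\ad^*$ applied to $S_i u$ and $S_i v$; once that substitution is made, every remaining term is of a shape to which \eqref{om} applies. Thus the main (if modest) obstacle is organizational: recognizing that neither hypothesis alone suffices, and arranging the computation so that the cocycle relation converts the bracket into adjoint operators before \eqref{om} is brought to bear.
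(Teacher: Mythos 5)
Your proof is correct: both identities you extract are valid (the $z_i$-component of \eqref{om} does give $\ad_u\circ S_i=-S_i\circ\ad_u^*$, and the $z_i$-component of the cocycle condition does give $S_i[u,v]_\G=\ad_v^*(S_iu)-\ad_u^*(S_iv)$), and the pairing computation then closes term by term in $i$ exactly as you describe. The ingredients are the same two the paper uses, but your organization is genuinely different. The paper never introduces the $\om$-structure endomorphisms $S_i$ here: it stays basis-free, working with the maps $\om_u:\G\too Z(\h)$ and the trace formula $\langle Du,v\rangle_\G=\tr(\om_u^*\circ\om_v)$; the cocycle condition is used in the operator form $\om_{[u,v]_\G}=\om_u\circ\ad_v-\om_v\circ\ad_u$, hypothesis \eqref{om} in the form $\om_{\ad_u^*v}=-\om_v\circ\ad_u^*$, and the derivation identity follows because the two resulting expressions, $\tr(\om_w\circ\ad_v^*\circ\om_u^*)-\tr(\om_w\circ\ad_u^*\circ\om_v^*)$ and $\tr(\om_u\circ\ad_v\circ\om_w^*)-\tr(\om_v\circ\ad_u\circ\om_w^*)$, are traces of mutually adjoint operators, hence equal. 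Your route instead fixes an orthonormal basis of the Euclidean center so that \eqref{Df} collapses to $D=-\sum_i S_i^2$, and verifies everything pointwise with no trace manipulations; this is more elementary and in fact yields a slightly stronger conclusion, namely that each $-S_i^2$ is separately a derivation of $(\G,\br_\G)$, not merely the sum $D$. What the paper's formulation buys in exchange is independence of any basis choice (and of the signature of the center) and consistency with the trace formalism it uses throughout Section 3 (Proposition \ref{prrici}, Corollary \ref{coe}); what yours buys is transparency and the finer, index-by-index statement.
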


\begin{proof} Since $\om$ is a 2-cocycle then
	\[ \om_{[u,v]_\G}=\om_u\circ\ad_v-\om_v\circ\ad_u. \]
	We also have, for any $u,v,w\in\G$
	\begin{align*}
	\langle [Du,v]_\G,w\rangle_\G+\langle [u,Dv]_\G,w\rangle_\G&=-\tr(\om_{\ad_v^*w}\circ\om_u^*)+
	\tr(\om_{\ad_u^*w}\circ\om_v^*),\\
	&=\tr(\om_w\circ\ad_v^*\circ\om_u^*)-\tr(\om_w\circ\ad_u^*\circ\om_v^*),\\
	\langle D[u,v]_\G,w\rangle&=\tr(\om_{[u,v]_\G}\circ\om_w^*)\\
	&=\tr(\om_u\circ\ad_v\circ\om_w^*)-\tr(\om_v\circ\ad_u\circ\om_w^*). 
	\end{align*}
\end{proof}

\begin{pr} \label{prrici}The Ricci curvature $\ric_\h$ of $(\h,\br,\prs_\h)$ is given by
	\begin{align*}
	\ric_\h(u,v)&=\ric_\G(u,v)
	-\frac12\tr(\om_u^*\circ\om_v),\quad u,v\in\G,\\
	\ric_\h(x,y)&=-\frac14\tr(S_x\circ S_y),\quad x,y\in Z(\h),\\
	\ric_\h(u,x)&=-\frac14\tr(J_u\circ S_x),\quad x\in Z(\h),u\in\G,
	\end{align*}where $\ric_\G$ is the Ricci curvature of $(\G,\br_\g,\prs_\g)$.

\end{pr}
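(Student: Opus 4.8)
The plan is to substitute the nilpotent Ricci formula \eqref{ricci1} into the orthogonal splitting $\h=\G\overset{\perp}{\oplus}Z(\h)$ and reduce every trace taken over $\h$ to a trace over $\G$ together with an $\om$-contribution. Throughout I would compute traces in an orthonormal basis of $\h$ obtained by concatenating an orthonormal basis $(e_k)$ of $\G$ with an orthonormal basis $(f_i)_{i=1}^p$ of $Z(\h)$; the latter is genuinely orthonormal because $Z(\h)$ is Euclidean, and for any endomorphism $C$ of $\h$ one has
\[
\tr C=\sum_k\langle e_k,e_k\rangle_\h\,\langle Ce_k,e_k\rangle_\h+\sum_i\langle Cf_i,f_i\rangle_\h.
\]
The key preliminary observation is that $Z(\h)$ is central, so $\ad_x=0$ for every $x\in Z(\h)$; this makes the first term of \eqref{ricci1} disappear in the second and third identities.

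First I would record the block form of the relevant endomorphisms. For $u\in\G$ the operator $\ad_u$ annihilates $Z(\h)$ and sends $v\in\G$ to $[u,v]_\G+\om_u(v)$, so taking the adjoint with respect to $\prs_\h=\prs_\G\overset{\perp}{\oplus}\prs_z$ gives, for $w=w_\G+w_z$,
\[
\ad_u^*(w)=(\ad^\G_u)^*(w_\G)+\om_u^*(w_z),
\]
where $\ad^\G_u$ denotes the adjoint representation of $(\G,\br_\G)$. Feeding this into $J_a(w)=\ad_w^*(a)$ yields the two facts that drive the computation: for $u\in\G$ the operator $J_u$ vanishes on $Z(\h)$ and restricts on $\G$ to the operator $J^\G_u$ of $\G$, while for $x\in Z(\h)$ the operator $J_x$ vanishes on $Z(\h)$ and restricts on $\G$ to $w\mapsto\om_w^*(x)=S_x(w)$. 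In other words $J_x=S_x$ on $\G$, which is the bridge between the abstract formula \eqref{ricci1} and the $\om$-structure endomorphisms of \eqref{car}.

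With these identifications the three traces assemble quickly. For $u,v\in\G$, splitting the basis sum shows that the $(e_k)$-part of $\tr(\ad_u\circ\ad_v^*)$ reproduces exactly $\tr_\G\big(\ad^\G_u\circ(\ad^\G_v)^*\big)$, while the $(f_i)$-part contributes $\sum_i\langle\om_u^*(f_i),\om_v^*(f_i)\rangle_\G=\tr(\om_u^*\circ\om_v)$; simultaneously $\tr(J_u\circ J_v)=\tr_\G(J^\G_u\circ J^\G_v)$ since both factors kill $Z(\h)$. Comparing with the expression \eqref{ricci1} for $\ric_\G$ then gives the first identity, the extra term $-\tfrac12\tr(\om_u^*\circ\om_v)$ being precisely the new $Z(\h)$-contribution. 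For $x,y\in Z(\h)$ and for $u\in\G,\,x\in Z(\h)$ the $\ad$-term is absent and the previous step leaves only $-\tfrac14\tr(J_x\circ J_y)=-\tfrac14\tr(S_x\circ S_y)$ and $-\tfrac14\tr(J_u\circ J_x)=-\tfrac14\tr(J_u\circ S_x)$ respectively.

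I expect the only delicate point to be the adjoint bookkeeping of the second step, namely correctly separating $\ad_u^*$ into its $\G$-part and its $Z(\h)$-part, together with the identification $\sum_i\langle\om_u^*(f_i),\om_v^*(f_i)\rangle_\G=\tr(\om_u^*\circ\om_v)$ via $\tr(AB)=\tr(BA)$, which is where the cross term and its coefficient $\tfrac12$ are born. Everything else is routine manipulation of traces.
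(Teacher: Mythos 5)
Your proposal is correct and follows essentially the same route as the paper: both substitute the block decompositions of $\ad_u^\h$, $J_u^\h$, $J_x^\h$ (and $\ad_x^\h=0$) relative to the splitting $\h=\G\oplus Z(\h)$ into formula \eqref{ricci1} and read off the traces. The paper merely records these block forms and leaves the trace bookkeeping implicit, whereas you carry it out explicitly, including the identification $\sum_i\langle\om_u^*(f_i),\om_v^*(f_i)\rangle_\G=\tr(\om_u^*\circ\om_v)$; the details check out.
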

\begin{proof} According to \eqref{ricci1}, for any $a,b\in\h$,
	\[ \ric_\h(a,b)=-\frac12\tr(\ad_a^\h\circ(\ad_b^\h)^*)-\frac14\tr(J_a^\h\circ J_b^h ), \]where $\ad_a^h:\h\too\h$, $b\mapsto [a,b]$ and $J_a^\h:\h\too\h$, $b\mapsto (\ad_b^\h)^*(a)$. The desired formula will be a consequence of this one and the following relations. For any $u\in\G$, $x\in Z(\h)$, with respect to the splitting $\h=\G\oplus Z(\h)$, we have
	\[ \ad_u^\h=\left(\begin{array}{cc} \ad_u^\G&0\\\om_u&0   \end{array}   \right),\; 
	J_u^\h=\left(\begin{array}{cc}  J_u^\G&0\\0&0   \end{array}   \right),\; J_x^\h=\left(\begin{array}{cc}  S_x&0\\0&0   \end{array}   \right)\esp \ad_x^\h=0.  \]

\end{proof}

\begin{co}\label{coe} $(\h,\br,\prs_\h)$ is $\la$-Einstein if and only if  for any $u,v\in\G$ and $x,y\in Z(\h)$,
	\begin{equation} \label{es} \ric_\g(u,v)=\la\langle u,v\rangle_\g+\frac12\tr(\om_u^*\circ\om_v),\;
	\tr( J_u\circ S_x)=0\esp \tr(S_x\circ S_y)=-4\la \langle x,y\rangle_z.
	\end{equation}
	
\end{co}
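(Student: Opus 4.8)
The plan is to read this off directly from Proposition \ref{prrici} by unpacking the definition of $\la$-Einstein along the orthogonal splitting $\h=\G\overset\perp\oplus Z(\h)$. Recall that $(\h,\br,\prs_\h)$ is $\la$-Einstein precisely when $\ric_\h(a,b)=\la\langle a,b\rangle_\h$ for all $a,b\in\h$. Since a symmetric bilinear form on $\h$ is completely determined by its three blocks relative to the orthogonal decomposition, namely its restrictions to $\G\times\G$, to $Z(\h)\times Z(\h)$, and to the mixed pairs $\G\times Z(\h)$, the single tensor identity $\ric_\h=\la\prs_\h$ is equivalent to the conjunction of the three corresponding block identities. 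My whole task is therefore to transcribe each block using the formulas of Proposition \ref{prrici}.

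First I would treat the block $\G\times\G$: here $\langle u,v\rangle_\h=\langle u,v\rangle_\g$ for $u,v\in\G$, so the first formula of Proposition \ref{prrici} turns $\ric_\h(u,v)=\la\langle u,v\rangle_\g$ into $\ric_\G(u,v)-\frac12\tr(\om_u^*\circ\om_v)=\la\langle u,v\rangle_\g$, which rearranges to the first equation of \eqref{es}. Next, on the block $Z(\h)\times Z(\h)$ one has $\langle x,y\rangle_\h=\langle x,y\rangle_z$, and the second formula of Proposition \ref{prrici} converts $\ric_\h(x,y)=\la\langle x,y\rangle_z$ into $-\frac14\tr(S_x\circ S_y)=\la\langle x,y\rangle_z$, that is, the third equation of \eqref{es}. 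Finally, on the mixed block, orthogonality of the splitting gives $\langle u,x\rangle_\h=0$ for $u\in\G$ and $x\in Z(\h)$, so the Einstein condition reads simply $\ric_\h(u,x)=0$; the third formula of Proposition \ref{prrici} then makes this exactly $\tr(J_u\circ S_x)=0$, the middle equation.

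Because each of the three block identities is obtained by a reversible substitution followed by rearrangement, both implications of the corollary follow simultaneously, so I would not split the argument into ``only if'' and ``if'' parts. I do not expect any genuine obstacle here: the corollary is essentially a restatement of Proposition \ref{prrici} under the Einstein hypothesis, and all the real work sits in that proposition. The only points requiring care are bookkeeping ones, namely keeping track of which inner product ($\langle\cdot,\cdot\rangle_\g$, $\langle\cdot,\cdot\rangle_z$, or the vanishing mixed pairing) governs each block, and carrying the sign and numerical factors of \eqref{ricci1} correctly through Proposition \ref{prrici} into the constants $\tfrac12$, $0$, and $-4\la$ appearing in \eqref{es}.
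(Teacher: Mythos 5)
Your proof is correct and is exactly how the paper obtains this corollary: it is an immediate block-by-block transcription of Proposition \ref{prrici} under the Einstein condition (the paper gives no separate proof for precisely this reason), with the only implicit step being the standard equivalence between $\Ri=\la\,\mathrm{Id}_\h$ and $\ric_\h=\la\prs_\h$ via nondegeneracy of the metric, which you handle correctly.
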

Let us derive some consequences of Proposition \ref{prrici} and Corollary \ref{coe}. In what follows $\h$ will be an Einstein Lorentzian nilpotent Lie algebra with nondegenerate center, we denote $\br_\h$ its Lie bracket, $\prs_\h$ its Lorentzian product and $(\G,\br_\G,\prs_\G)$, $(Z(\h),\prs_z)$ and $\om\in Z^2(\G,Z(\h))$ its attributes.

Recall that a pseudo-Euclidean Lie algebra $(\G,\br,\prs)$ is called Ricci-soliton if there exists a constant $\la\in\R$ and derivation $D$ of $\G$ such that $\Ric_\G=\la\mathrm{Id}_\G+D$. By combining Corollary \ref{coe} and Proposition \ref{ompr} we get the following result.

\begin{pr}\label{sol} Let $\h$ be a Einstein Lorentzian nilpotent Lie algebra with Euclidean nondegenerate center. If $\om$ satisfies \eqref{om} then $(\G,\br_\G,\prs_\G)$ is Ricci-soliton. 
	
\end{pr}
\begin{pr}
	Let $\h$ be a $\la$-Einstein Lorentzian nilpotent Lie algebra with non-degenerate center. If $\lambda\neq 0$ then the cohomology class of  the attribute $\omega$ is non trivial. In particular, $H^2(\G,Z(\g))\not=\{0\}$.
\end{pr}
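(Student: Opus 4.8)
The plan is to prove the contrapositive: assuming that the cohomology class of $\om$ is trivial, I will show that $\la=0$. If $[\om]=0$ in $H^2(\G,Z(\h))$, then $\om$ is a coboundary, meaning there exists a linear map $\varphi:\G\too Z(\h)$ such that $\om(u,v)=\varphi([u,v]_\G)$ for all $u,v\in\G$. The key observation is that a coboundary is supported on the derived algebra: $\om(u,v)$ depends only on $[u,v]_\G\in[\G,\G]_\G$. I expect this support condition to be the main leverage for showing that the $\om$-contribution to the Einstein equation is forced to vanish in a way that drags $\la$ down to zero.

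The main technical step is to compute the trace $\tr(S_x\circ S_y)$ appearing in the third Einstein equation of Corollary \ref{coe}, namely $\tr(S_x\circ S_y)=-4\la\langle x,y\rangle_z$. First I would unpack $S_x$ using its definition $S_x(u)=\om_u^*(x)$ together with the coboundary form of $\om$. Writing $\om(u,v)=\langle\varphi([u,v]_\G),\cdot\rangle$ contracted against a basis of $Z(\h)$, one sees that each $S_x$ factors through $\ad$: concretely, for each $x$ there is a fixed vector whose image under $S_x$ lands in $[\G,\G]_\G$, and $S_x$ annihilates the orthogonal complement of $[\G,\G]_\G$ on one side. The point is that since $\G$ is nilpotent, $[\G,\G]_\G$ is a proper ideal and the structure endomorphisms $S_x$ built from a coboundary have image and adjoint-image confined to proper subspaces, which will force each $S_x$ to be nilpotent as an endomorphism of $\G$.

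Once $S_x$ is shown to be nilpotent for every $x\in Z(\h)$, the composition $S_x\circ S_y$ is a product of endomorphisms drawn from a commuting-like nilpotent family; more carefully, I would argue that $\tr(S_x\circ S_y)=0$ for all $x,y$. The cleanest route is: a skew-symmetric endomorphism whose square has zero trace against the metric must, in the relevant filtration coming from nilpotency, contribute no diagonal trace. Taking $x=y$ and using that the center is Euclidean, $\langle x,x\rangle_z>0$ for $x\neq0$, the equation $\tr(S_x\circ S_x)=-4\la\langle x,x\rangle_z$ then reads $0=-4\la\langle x,x\rangle_z$, whence $\la=0$. This establishes the contrapositive and hence the first claim. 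The second claim, $H^2(\G,Z(\G))\neq\{0\}$, is then immediate: a nontrivial cohomology class lives in a nontrivial cohomology group, so $H^2(\G,Z(\h))\neq\{0\}$, and I would remark that the statement's $Z(\G)$ should be read as the coefficient module $Z(\h)$.

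The hard part will be the rigorous verification that a coboundary $\om$ produces nilpotent $\om$-structure endomorphisms $S_x$ with vanishing self-trace. The delicate issue is that skew-symmetry is taken with respect to the \emph{Lorentzian} product $\prs_\G$, so nilpotency of $S_x$ does not by itself give $\tr(S_x^2)=0$ in the indefinite setting the way it would Euclideanly; I will need to exploit the precise way the coboundary interacts with the grading of the nilpotent algebra $\G$, pairing the image of $S_x$ (inside $[\G,\G]_\G$) against the part of $\G$ it actually sees, and check that these two subspaces are metrically orthogonal enough to kill the trace. An alternative and possibly cleaner argument would bypass $S_x$ entirely and instead show directly from $\om=d\varphi$ that the symmetric operator $D$ of \eqref{D} has zero trace, combined with the scalar-curvature identity $\tr\Ric_\h=0$ for nilpotent Lie algebras forcing $\la\cdot\dim\h=0$; I would keep both routes in mind and select whichever makes the trace vanishing most transparent.
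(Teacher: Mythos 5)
Your contrapositive setup and your target (forcing $\tr(S_x\circ S_x)=0$ and then using that the center is Euclidean) coincide with the paper's, and you even notice the right starting point --- that for a coboundary each $S_x$ ``factors through $\ad$'' --- but you then abandon the decisive consequence of this observation, and the mechanism you substitute for it does not work. If $\om(u,v)=\varphi([u,v]_\G)$ for a linear map $\varphi:\G\too Z(\h)$, then $\om_u=\varphi\circ\ad_u$, hence $\om_u^*=\ad_u^*\circ\varphi^*$ and
\[
S_x(u)=\om_u^*(x)=\ad_u^*\bigl(\varphi^*(x)\bigr)=J_{\varphi^*(x)}(u),
\]
so each $\om$-structure endomorphism $S_x$ \emph{is} a Lie structure endomorphism $J_w$ with $w=\varphi^*(x)\in\G$. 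This is the whole proof in the paper: the Einstein system \eqref{es} contains, besides the equation $\tr(S_x\circ S_y)=-4\la\langle x,y\rangle_z$ that you use, the mixed condition $\tr(J_u\circ S_x)=0$ for \emph{all} $u\in\G$ and $x\in Z(\h)$; applying it with $u=\varphi^*(x)$ gives at once $\tr(S_x^2)=\tr(J_{\varphi^*(x)}\circ S_x)=0$, whence $\la\langle x,x\rangle_z=0$ for every $x$ and $\la=0$. Your proposal never invokes the mixed condition, and without it the conclusion is unreachable, because vanishing of $\tr(S_x^2)$ is \emph{not} a structural property of coboundaries.

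Concretely, your key intermediate claim --- that the coboundary structure forces each $S_x$ to be nilpotent, hence $\tr(S_x\circ S_y)=0$ --- is false. Take for $\G$ the Heisenberg algebra with orthonormal basis $(p,q,z)$, $[p,q]_\G=z$, $\langle z,z\rangle_\G=-1$, and let $\om(u,v)=\langle z,[u,v]_\G\rangle_\G\, x_0$, where $x_0$ is a unit vector of a one-dimensional Euclidean space: this is the coboundary of $w\mapsto\langle z,w\rangle_\G x_0$, yet $S_{x_0}=J_z$ satisfies $J_z(p)=-q$, $J_z(q)=p$, so it is a rotation of $\mathrm{span}(p,q)$, semisimple rather than nilpotent, with $\tr(S_{x_0}^2)=-2\neq0$. (Of course this $(\G,\om)$ violates the Einstein system --- which is exactly the point: having image inside $Z(\G)^\perp$ and kernel containing $Z(\G)$, which is all the coboundary gives you, can never produce the trace vanishing; only the Einstein equations can.) Your fallback route collapses even earlier: $\tr\Ric_\h=0$ is not an identity for nilpotent Lie algebras --- by \eqref{riccinilpotent} and $\tr{\mathcal J}_1=\tr{\mathcal J}_2$ one has $\tr\Ric_\h=-\frac14\tr{\mathcal J}_1$, which is strictly negative for nonabelian Riemannian nilpotent Lie algebras, and the non-Ricci-flat Lorentzian Einstein nilpotent examples of \cite{Dconti1} cited in this paper have $\la\neq0$, hence nonzero scalar curvature --- and $\tr D=0$ likewise does not follow from $[\om]=0$ alone, as the same Heisenberg example shows. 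The repair is short: keep your observation that $S_x$ factors through $\ad$, upgrade it to the identity $S_x=J_{\varphi^*(x)}$, and conclude with the second equation of Corollary \ref{coe} instead of any nilpotency argument.
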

\begin{proof}
	Suppose that there exists $\al\in\G$ such that, for any $u,v\in\G$, $ \omega(u,v)=-\alpha([u,v]_\G)$. Fix an orthonormal basis $\{e_1,\dots,e_n\}$ of $\g$ with $\langle e_1,e_1\rangle=-1$. For any $x\in Z(\h)$, we have :
	\begin{align*}
	\tr(S_x^2) &=\langle S_x(e_1),S_x(e_1)\rangle_\g -\sum_{i=2}^n\langle S_x(e_i),S_x(e_i)\rangle_\g \\
	&=\langle \omega_{e_1}^*(x),S_x(e_1)\rangle_\g-\sum_{i=2}^n \langle \omega_{e_i}^*(x),S_x(e_i)\rangle_\g\\
	&=-\langle \ad_{e_1}^*\circ\alpha^*(x),S_x(e_1)\rangle_\g+\sum_{i=2}^n\langle \ad_{e_i}^*\circ\alpha^*(x),S_x(e_i)\rangle_\G\\
	&=-\langle J_{\alpha^*(x)}(e_1),S_x(e_1)\rangle_\g+\sum_{i=2}^n \langle J_{\alpha^*(x)}(e_i),S_x(e_i)\rangle_\g\\
	&=-\tr(J_{\alpha^*(x)}\circ S_x).
	\end{align*}
	By virtue of Corollary \ref{coe},  we get that $\lambda\langle x,x\rangle_z=0$ for any $x\in Z(\h)$ and hence $\lambda=0$.
\end{proof}
\begin{pr}
	\label{proponondeg2}
	Let $\h$ be a $\la$-Einstein Lorentzian nilpotent Lie algebra with non-degenerate center.  Then $[\g,\g]_\g$ is a non-degenerate Lorentzian subspace of $\g$. Moreover, if $\h$ is $3$-step nilpotent and $\lambda\geq 0$ then $\mathrm{Z}(\g)=[\g,\g]_\g$.
\end{pr}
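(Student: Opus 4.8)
The plan is to exploit the Ricci formula from Proposition \ref{prrici} together with the Einstein characterization in Corollary \ref{coe}, reading off nondegeneracy of $[\g,\g]_\g$ from the $Z(\h)$-component of the Einstein equation and positivity-type constraints forced by the Euclidean center. First I would record that, since $\om$ takes values in the \emph{Euclidean} center $Z(\h)$ and the $S_x$ are skew-symmetric, the symmetric operator $D$ of \eqref{D}--\eqref{Df} is positive semidefinite on the Lorentzian space $\g$ in a controlled way: writing an orthonormal basis $(z_i)$ of $Z(\h)$ one has $D=-\sum_i \epsilon_i S_i^2$ with all $\epsilon_i=+1$, so $\langle Du,u\rangle_\g=\sum_i\langle S_iu,S_iu\rangle_\g$, a sum of squared norms of vectors in the Lorentzian $\g$. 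The key observation is that $\img\om=\mathrm{span}\{S_iu : u\in\g\}$ and, because the bracket splits as $[u,v]=[u,v]_\g+\om(u,v)$, the kernel of the collection $\{S_i\}$ and the structure of $[\g,\g]_\g$ are tied together through the Einstein equations.

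Next I would attack nondegeneracy of $[\g,\g]_\g$ directly. Suppose $[\g,\g]_\g$ is degenerate; then, since $\g$ is Lorentzian, its radical is a line spanned by an isotropic vector $e$, and I would use the first Einstein equation in \eqref{es}, namely $\ric_\g(u,v)=\la\langle u,v\rangle_\g+\tfrac12\tr(\om_u^*\circ\om_v)$, to derive a contradiction. The point is that for a nilpotent $(\g,\br_\g)$ the Ricci form $\ric_\g$ is computable via \eqref{riccinilpotent} and vanishes on directions orthogonal to $[\g,\g]_\g$ in a way that clashes with $\la\langle e,e\rangle_\g=0$ unless the $\tr(\om_e^*\circ\om_e)$ term is forced to a definite sign; since $\om_e^*\circ\om_e=\sum_i\langle S_ie,\cdot\rangle$-type data is $\sum_i\|S_ie\|^2$ with $S_ie\in\g$, I expect the Euclidean center to make $\img\om$ behave like a positive contribution, pinning $[\g,\g]_\g$ to be spacelike-free in exactly the Lorentzian sense that forces nondegeneracy. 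I would carry this out by testing the Einstein identities against the radical vector $e$ and its $\prs_\g$-dual.

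For the second assertion, assume $\h$ is $3$-step nilpotent with $\la\geq0$. Then $(\g,\br_\g)$ is $2$-step nilpotent, so $[\g,\g]_\g\subseteq Z(\g)$ automatically, and it remains to prove the reverse inclusion $Z(\g)\subseteq[\g,\g]_\g$. Here I would take $u\in Z(\g)$, so $\ad_u^\g=0$ and hence $\mathcal J_1 u$ has no contribution from the first term of \eqref{ricci1}; combining Corollary \ref{coe} with the nondegeneracy just established and with the sign hypothesis $\la\geq0$, I would show that the Einstein equation forces any central $u$ orthogonal to $[\g,\g]_\g$ to be annihilated by every $S_i$ and by $\om$, which by the nilpotency/rigidity condition \eqref{rigid}, $Z(\g)\cap\ker\om=\{0\}$, collapses that component to zero. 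The main obstacle will be this last step: extracting from $\la\geq0$ the precise definiteness that rules out a central direction lying in $[\g,\g]_\g^\perp$, since the Lorentzian signature allows sign cancellations that one must close off using the Euclidean positivity of $\prs_z$ on the center together with the fact (from Theorem \ref{main1}'s circle of ideas) that the scalar curvature is controlled; balancing the two trace terms in \eqref{riccinilpotent} against the Einstein constant is where the real work lies.
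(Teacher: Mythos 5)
Your overall framework (test the Einstein identities of Corollary \ref{coe} against suitable vectors, then invoke the rigidity condition $Z(\g)\cap\ker\om=\{0\}$) is the right one for the second assertion, but both halves of the proposal have genuine gaps. The first half does not work as sketched. The nondegeneracy assertion is for a general nilpotent $\h$, so for $u\in[\g,\g]_\g^\perp$ you only get $J_u=0$; outside the $3$-step case $[\g,\g]_\g$ need not lie in $Z(\g)$, so $\ad_u$ need not vanish and your claim that $\ric_\g$ vanishes on directions orthogonal to $[\g,\g]_\g$ is false. Worse, testing \eqref{es} against an isotropic radical vector $e\in[\g,\g]_\g\cap[\g,\g]_\g^\perp$ only yields $\tr(\ad_e\circ\ad_e^*)+\tr(\om_e^*\circ\om_e)=0$, and neither trace is sign-definite in Lorentzian signature: as your own first paragraph records, $\langle De,e\rangle_\g=\sum_i\langle S_ie,S_ie\rangle_\g$ is a sum of \emph{Lorentzian} squared norms of the vectors $S_ie\in\g$, so the Euclidean center provides no positivity here. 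Positivity of such traces appears only when the arguments of $\om$ can be confined to a Euclidean subspace of $\g$, which is exactly what is unavailable while $[\g,\g]_\g$ is still allowed to be degenerate. The paper does not argue this way at all: it invokes \cite[Corollary 3.3]{bou0} (the derived ideal $[\h,\h]$ of an Einstein Lorentzian nilpotent $\h$ with nondegenerate center is nondegenerate Lorentzian), notes the elementary identity $[\g,\g]_\g^\perp=[\h,\h]^\perp\cap\g$, and concludes that $[\g,\g]_\g^\perp$ is Euclidean, hence $[\g,\g]_\g$ nondegenerate Lorentzian. Your route would require reproving that external result, a substantially harder task than the one-vector test you propose.

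For the second assertion your outline coincides with the paper's, but the two steps you explicitly defer (``the main obstacle'', ``where the real work lies'') are precisely the content of the proof, and they are missing. For $x\in Z(\g)\cap[\g,\g]_\g^\perp$ one has $\ad_x=0$ and $J_x=0$, hence $\Ric_\g(x)=0$ by \eqref{ricci1}, and \eqref{es} gives $\la\langle x,x\rangle_\g=-\frac12\tr(\om_x^*\circ\om_x)$. The left-hand side is nonnegative because $\la\geq0$ and, by part 1, $[\g,\g]_\g^\perp$ is Euclidean. To control the right-hand side you need the cocycle identity $\om(Z(\g),[\g,\g]_\g)=0$ (take the central vector as one argument in the cocycle equation), which you never invoke: it shows that $\tr(\om_x^*\circ\om_x)=\sum_i\langle\om(x,f_i),\om(x,f_i)\rangle_z$ over an orthonormal basis $(f_i)$ of the Euclidean space $[\g,\g]_\g^\perp$, a sum of nonnegative terms since $Z(\h)$ is Euclidean. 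Hence both sides vanish, so $\om_x=0$, and \eqref{rigid} forces $x=0$; splitting any central vector along $\g=[\g,\g]_\g\oplus[\g,\g]_\g^\perp$ then gives $Z(\g)=[\g,\g]_\g$. Without the cocycle identity and the Euclidean-ness of $[\g,\g]_\g^\perp$, the Lorentzian sign cancellations you worry about cannot be excluded, so as it stands the proposal is a program rather than a proof.
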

\begin{proof}
	According to \cite[Corollary 3.3]{bou0}, $[\h,\h]$ is nondegenerate Lorentzian and, one can easily see that $[\g,\g]_\g^\perp=[\h,\h]^\perp\cap\g$.
	Therefore $[\g,\g]_\g^\perp$ is nondegenerate Euclidean and hence $[\g,\g]_\g$ is nondegenerate Lorentzian. 
	
	Suppose now that $\h$ is $3$-step nilpotent. Then $\g$ is $2$-step nilpotent and therefore $[\g,\g]_\g\subset\mathrm{Z}(\g)$. Let $x\in\mathrm{Z}(\g)\cap[\g,\g]_\g^\perp$. Since $\ad_x=0$ and $J_x=0$, by virtue of \eqref{ricci1} ,  $\Ric_\g(x)=0$.  If $\lambda\geq 0$, the first equation of system \eqref{es} gives that :
	$$ 0\leq \lambda\langle x,x\rangle=-\dfrac12\mathrm{tr}(\omega_x^*\circ\omega_x)=Q.$$
	Since $\om$ is a 2-cocycle, $\om(Z(\G),[\G,\G]_\g)=0$ and hence
	\[ Q=-\dfrac12\sum_{i=1}^m\langle \omega(x,f_i),\omega(x,f_i)\rangle\leq 0 \]where $\{f_1,\dots,f_m\}$ is an orthonormal basis of $[\g,\g]_\g^\perp$. It follows that  $x\in\mathrm{Z}(\g)\cap\ker\om$ and hence $x=0$ by virtue of \eqref{rigid}. Thus $\mathrm{Z}(\g)=[\g,\g]_\g$.
\end{proof}

\begin{theo}\label{main1}
	Let $\h$ be a $\la$-Einstein Lorentzian $3$-step nilpotent  Lie algebra with nondegenerate center.  Then $\lambda\geq 0$.
\end{theo}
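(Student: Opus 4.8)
The plan is to push the Einstein condition, through Corollary \ref{coe}, down to a couple of scalar trace identities on the attributes $(\g,\br_\g,\prs_\g)$, $(Z(\h),\prs_z)$, $\om$, and then to read off the sign of $\la$ by a purely linear-algebraic elimination. Throughout I write $\mathfrak{z}=[\g,\g]_\g$ and $\mathfrak{v}=\mathfrak{z}^\perp$. Since $\h$ is $3$-step nilpotent, $\g$ is $2$-step nilpotent, so $\mathfrak{z}\subset Z(\g)$; by the first part of Proposition \ref{proponondeg2} (which does \emph{not} use $\la\ge0$) $\mathfrak{z}$ is nondegenerate Lorentzian, hence $\mathfrak{v}$ is Euclidean and $\g=\mathfrak{v}\stackrel{\perp}{\oplus}\mathfrak{z}$ with the whole negative direction of $\g$ sitting inside $\mathfrak{z}$.

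First I would record the block form of the $\om$-structure endomorphisms $S_1,\dots,S_p$ attached to an orthonormal basis $(z_1,\dots,z_p)$ of $Z(\h)$. Because $\om$ is a $2$-cocycle and $\mathfrak{z}\subset Z(\g)$ one has $\om(\mathfrak{z},\mathfrak{z})=0$, whence $S_i(\mathfrak{z})\subset\mathfrak{v}$; relative to $\g=\mathfrak{v}\oplus\mathfrak{z}$ this gives $S_i=\left(\begin{smallmatrix}P_i&R_i\\Q_i&0\end{smallmatrix}\right)$ with $P_i$ skew-symmetric on the Euclidean space $\mathfrak{v}$ and $R_i=-Q_i^{t}$. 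The naive first attempt is to use the third equation of \eqref{es}, $\tr(S_x^2)=-4\la\langle x,x\rangle_z$, and try to prove $\tr(S_x^2)\le0$; but $S_x$ is skew for an indefinite product, and the timelike unit vector $e_1\in\mathfrak{z}$ produces a positive term $2\sum_a\langle\om(e_1,v_a),x\rangle^2$ (here $(v_a)$ is an orthonormal basis of $\mathfrak{v}$) that is genuinely hard to dominate. I expect this pointwise sign analysis to be the main obstacle, and the whole idea is to bypass it.

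Instead I would introduce the two scalars $A=-\sum_i\tr(P_i^2)=\sum_i\|P_i\|^2\ge0$ and $B=\sum_{i}\sum_a\langle Q_i v_a,Q_i v_a\rangle_{\mathfrak{z}}$ (an \emph{indefinite} quantity), and derive two independent linear relations among $A$, $B$, $\la$. Summing the third equation of \eqref{es} over the basis and using $\tr(S_i^2)=\tr(P_i^2)+2\tr(R_iQ_i)$ with $\tr(R_iQ_i)=-\sum_a\langle Q_iv_a,Q_iv_a\rangle_{\mathfrak{z}}$ gives the first relation $A+2B=4\la p$. For the second relation I would compute the scalar curvature $s_\g$ of $\g$ twice. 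From the first equation of \eqref{es} and $D=-\sum_i S_i^2$ one gets $\tr(D)=4\la p$ and hence $s_\g=\la(\dim\g+2p)$; on the other hand, since $\g$ is $2$-step nilpotent the operators $\mathcal{J}_1^\g$ and $\mathcal{J}_2^\g$ are supported on $\mathfrak{v}$ and $\mathfrak{z}$ respectively, so $\tr\mathcal{J}_1^\g=\tr\mathcal{J}_2^\g$ forces $\tr(\Ric_\g|_{\mathfrak{v}})=-\tfrac12\tr\mathcal{J}_1^\g=2s_\g$. Comparing this with the $\mathfrak{v}$-trace of the first equation of \eqref{es}, namely $\tr(\Ric_\g|_{\mathfrak{v}})=\la\dim\mathfrak{v}+\tfrac12(A+B)$, yields the second relation $A+B=2\la(\dim\g+\dim\mathfrak{z}+4p)$.

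Finally I would eliminate $B$. Subtracting the two relations gives $B=-2\la(\dim\g+\dim\mathfrak{z}+2p)$ and therefore $A=4\la(\dim\g+\dim\mathfrak{z}+3p)$. Since $A\ge0$ and the coefficient $\dim\g+\dim\mathfrak{z}+3p$ is strictly positive, this forces $\la\ge0$, which is the assertion. The difficulty is thus entirely concentrated in the bookkeeping of the trace identities (the block decomposition of the $S_i$ and the $2$-step structure of $\g$): once these are in place the sign of $\la$ drops out automatically from the nonnegativity of the single Euclidean quantity $A$, the troublesome indefinite quantity $B$ having been cancelled in the process.
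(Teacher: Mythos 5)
Your proof is correct and follows essentially the same route as the paper: the same splitting $\g=[\g,\g]_\g\oplus[\g,\g]_\g^\perp$ obtained from the first part of Proposition \ref{proponondeg2}, the same block decomposition of the $\omega$-structure endomorphisms $S_i$, and trace identities that culminate in the identical relation $A=4\la\left(\dim\g+\dim[\g,\g]_\g+3p\right)$, whence $\la\geq0$ from skew-symmetry of the $P_i$ on the Euclidean complement. The only organizational difference is that you extract the two trace relations via the scalar curvature computed in two ways (using $\tr\,\mathcal{J}_1=\tr\,\mathcal{J}_2$ and the support properties of $\mathcal{J}_1,\mathcal{J}_2$), whereas the paper writes out the full block Einstein system \eqref{3stepeq6} and takes traces of its three equations; these encode exactly the same information.
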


\begin{proof} According to \eqref{es}, since $\h$ is $\la$-Einstein then
	\begin{equation}\label{prel} \Ric_\G=\la \mathrm{Id}_\G+\frac12D\esp \tr(S_x\circ S_y)=-4\la \langle x,y\rangle_z, \end{equation}for any $x,y\in Z(\h)$. On the other hand, by virtue of Proposition \ref{proponondeg2}, $[\G,\G]$ is nondegenerate Lorentzian and hence $\G=[\G,\G]\oplus[\G,\G]^\perp$. We choose an orthonormal basis $\B_0=(e_1,\ldots,e_s)$ of $[\G,\G]$ with $\langle e_1,e_1\rangle_\G=-1$ and an orthonormal basis $\B_1=(z_1,\ldots,z_p)$ of $Z(\h)$ and we consider the Lie structure endomorphisms $(J_1,\ldots,J_s)$ associated to $\B_0$ and given by $\eqref{bracket}$ and $(S_1,\ldots,S_p)$ the $\om$-structure endomorphisms associated to $\B_1$ and given by \eqref{car}. 
	
	Since $\G$ is 2-step nilpotent then $[\G,\G]\subset Z(\G)$, hence for any $i=1,\ldots,s$, $J_i([\G,\G])=0$. Moreover, $J_i$ being skew-symmetric leaves $[\G,\G]^\perp$ invariant and we shall denote its restriction to $[\G,\G]^\perp$ by $J_i$ as well. On the other hand, since $\om$ is a 2-cocycle then $\om(Z(\G),[\G,\G])=0$ and hence, by virtue of \eqref{car}, for any $i=1,\ldots,p$, $S_i([\G,\G])\subset[\G,\G]^\perp$, we denote $B_i:[\G,\G]\too[\G,\G]^\perp$ the resulting linear map. Since $S_i$ is skew-symmetric, then for any $u\in[\G,\G]^\perp$, $S_iu=-B_i^*u+D_iu$ where $D_i:[\G,\G]^\perp\too[\G,\G]^\perp$ is skew-symmetric. By using \eqref{riccinilpotent}, \eqref{invariant} and \eqref{Df}, we get that \eqref{prel} is equivalent to
	\begin{equation}
	\label{3stepeq6}
	\begin{cases}
	-\dfrac{1}{2}J_1^2+\dfrac{1}{2}\mathlarger\sum\limits_{i=2}^s J_i^2+\dfrac{1}{2}\mathlarger\sum\limits_{i=1}^p (D_i^2-B_iB_i^*) = \lambda \mathrm{Id}_{[\g,\g]^\perp}.   \\
	\mathlarger\sum\limits_{i,j=1}^s \langle e_i,\;.\;\rangle\mathrm{tr}(J_i\circ J_j)e_j+2\mathlarger\sum\limits_{i=1}^p B_i^*B_i = -4\lambda \mathrm{Id}_{[\g,\g]}.   \\
	\mathrm{tr}(D_iD_j)-2\mathrm{tr}(B_i^*B_j)= -4\lambda\delta_{ij},\;i,j=1,\ldots,p.
	\end{cases}
	\end{equation}
	By taking the trace of the first two equations and  using the third one we obtain that :
	\begin{equation*}
	\mathlarger\sum_{i=1}^p\mathrm{tr}(D_i^2)=-4(2s+m+3p)\lambda,\quad m=\dim[\G,\G]^\perp.
	\end{equation*}
	But $[\G,\G]^\perp$ is a Euclidean vector space and $D_i:[\G,\G]^\perp\too[\G,\G]^\perp$ is skew-symmetric and hence $\tr(D_i^2)\leq0$ which completes the proof.
\end{proof}

To sum up the results of this section, we  reduced the study of Einstein Lorentzian $k$-step nilpotent Lie algebras to the study of a class of Lorentzian $(k-1)$-step nilpotent Lie algebras endowed with a 2-cocycle with values in a Euclidean vector space which in some cases can be Ricci-soliton. It is natural to give a name to this class of Lie algebras.

\begin{Def} \label{DefOmegaQuasi} A pseudo-Euclidean Lie algebra $(\G,\br_\G,\prs_\G)$ will be called $\om$-quasi Einstein of type $p$ if there exists $\la\in\R$ and a 2-cocycle $\om$ with values in a Euclidean vector space $(V,\prs_z)$ of dimension $p$ such that $\ker\om\cap Z(\G)=\{0\}$ and
	\[ \Ric_\G=\la\mathrm{Id}_\G+\frac12D,\;\;\;\;\tr(S_x\circ S_y)=-4\lambda\langle x,y\rangle_z \] where $S_x:\g\longrightarrow\g$ denotes the $\omega$-structure endomorphism corresponding to $x\in V$ and $D$ is given by
	\[ \langle Du,v\rangle_\G=\tr(\om_u^*\circ\om_v) \]and $\om_u:\G\too V$, $v\mapsto\om(u,v)$.
	
\end{Def}

\section{ $\om$-quasi Einstein Lorentzian 2-step nilpotent Lie algebras of type 1 }\label{section4}
In this section, having in mind Proposition \ref{proponondeg2} and Theorem \ref{main1}, we give a complete description of $\om$-quasi Einstein Lorentzian 2-step nilpotent Lie algebras of type 1 with nondegenerate Lorentzian derived ideal and Einstein constant $\la\geq0$ as an important step towards the determination of Einstein Lorentzian 3-step nilpotent Lie algebras with nondegenerate $1$-dimensional center.

Let $(\G,\br_\G,\prs_\G)$ be a 2-step nilpotent Lie algebra  such that $Z(\G)=[\G,\G]$ is nondegenerate Lorentzian.  Put $n=\dim[\G,\G]$ and $m=\dim[\G,\G]^\perp$.

Suppose that $\G$ is $\om$-quasi Einstein of type 1 with Einstein constant $\la\geq0$.
Denote by $S:\G\too\G$ the skew-symmetric endomorphism given by
$\om(u,v)=\langle Su,v\rangle_\G$. Since $\om$ is a 2-cocycle and $[\G,\G]\subset Z(\G)$ then $S([\G,\G])\subset[\G,\G]^\perp$ leading to a linear map $B:[\G,\G]\too[\G,\G]^\perp$.  The condition $Z(\G)\cap\ker\om=\{0\}$ implies that $B$ is injective. On the other hand, the skew-symmetry of $S$ gives that, for any $u\in[\G,\G]^\perp$, $Su=-B^*u+Lu$ where $L$ is a skew-symmetric endomorphism of $[\G,\G]^\perp$. Now consider the endomorphism $D$ associated to $\om$ and given by \eqref{D}. According to \eqref{Df}, $D=-S^2$ and hence
\[ Du=\begin{cases}B^*Bu-LBu\;\mbox{if}\; u\in[\G,\G],\\B^*Lu+BB^*u-L^2u
\;\mbox{if}\; u\in[\G,\G]^\perp.
\end{cases} \]
The fact that $\G$ is $\om$-quasi Einstein is equivalent to
\begin{equation}\label{meq}
-\frac12\mathcal{J}_1+\frac14\mathcal{J}_2-\frac12D=\la\mathrm{Id}_\G,\;\;\;\;\tr(S^2)=-4\lambda,
\end{equation} where, by virtue of \eqref{riccinilpotent}, $\Ric_\G=-\frac12\mathcal{J}_1+\frac14\mathcal{J}_2$.

Let us proceed now to a crucial step which is not possible to perform when $\om$ has it values in a vector space of dimension $\geq2$.

We consider  the symmetric endomorphism on $[\G,\G]$ given by $A=B^*B$. 
Since $B$ is injective and $[\G,\G]^\perp$ is nondegenerate Euclidean,
we have $\langle Au,u\rangle_\G>0$ for any $u\in\G\setminus\{0\}$. There are two types of nondiagonalizable symmetric endomorphisms on a Lorentzian vector space  (see \cite[p. 261-262]{Oneill}). Those which have an isotropic eigenvector or those which have two linearly orthogonal vectors $(e,f)$ such that $\langle e,e\rangle=1$, $\langle f,f\rangle=-1$,  $T(e)=ae-bf$ and $T(f)=be+af$. The fact that $A$ is  positive definite prevents it to be of these types and hence $A$ is diagonalizable in an orthonormal basis $\B_1=(e_1,\ldots,e_n)$ of $[\g,\g]$ such that $\langle e_1,e_1\rangle_\G=-1$.
Let $(J_1,\ldots,J_n)$ be the Lie structure endomorphisms associated to $\B_1$. Note that the $J_i$ vanishes on $[\G,\G]\subset Z(\G)$ and hence leaves invariant $[\G,\G]^\perp$. We denote the restriction of $J_i$ to $[\G,\G]^\perp$ by $J_i$ as well. Using \eqref{riccinilpotent} and \eqref{invariant}, we get that \eqref{meq} is equivalent to
\begin{equation}\label{meq1} \begin{cases}\di -\frac12J_1^2+\frac12\sum_{j=2}^nJ_j^2+\frac12(L^2-BB^*)=\la\mathrm{Id}_{[\G,\G]^\perp},\\\di
-2B^*B-\sum_{i,j=1}^n\langle e_i,u\rangle{\tr}(J_i\circ
J_j)e_j=4\la\mathrm{Id}_{[\G,\G]},\\\di
\tr(L^2)-2\tr(BB^*)=-4\lambda,\\[.1in]\di
LB=0.\end{cases} \end{equation}
Taking the trace of the first two equations and using the the third equation of \eqref{meq1} we get that :
$$ \tr(L^2)=-4(2n+m+3)\lambda,\;\;\;\;n=\dim[\g,\g],\;m=\dim[\g,\g]^\perp.$$
When $m=n$, $B:[\g,\g]\longrightarrow[\g,\g]^\perp$ is an isomorphism and therefore $LB=0$ leads to $L=0$ and by the previous equation $\lambda=0$. We will show that this fact is still true in the general setting.\\
Put $\B_2=(f_1,\ldots,f_n)=\left(\frac{B(e_1)}{|B(e_1)|},\ldots,\frac{B(e_n)}{|B(e_n)|}\right)$ which is obviously an orthonormal basis of $\img(B)$. Since $LB=0$,
$L$ vanishes on $\mathrm{Im}(B)$ and leaves invariant $\mathrm{Im}(B)^\perp=\ker BB^*$. Thus $L(f_i)=0$ and there exists an orthonormal basis $\B_3=(g_1,h_1,\ldots,g_r,h_r,p_1,\ldots,p_s)$ of $\ker BB^*$ such that
\[ L(g_i)=\mu_i h_i,\; L(h_i)=-\mu_i g_i, L(p_j)=0. \]
The basis $\B_1$ consists of eigenvectors of $B^*B$ and hence the second relation in \eqref{meq1} is equivalent to
\[ B^*B(e_i)=-\left(2\la+\frac12\langle e_i,e_i\rangle_\G\tr(J_i^2)\right)e_i,\;\tr(J_i\circ J_j)=0,\; i,j=1,\ldots,n, j\not=i. \]On the other hand, we also have, 
\begin{equation}\label{f} BB^*(f_i)=-\left(2\la+\frac12\langle e_i,e_i\rangle_\G\tr(J_i^2)\right)f_i,\quad i=1,\ldots,n. \end{equation}
Summing up the above remarks, if $M_i$ denotes the matrix of the restriction of $J_i$ to $[\G,\G]^\perp$ in the basis $\B_2\cup\B_3$ then \eqref{meq} implies that
\begin{equation}\label{M}
M_1^2-\sum_{k=2}^nM_k^2=\mathrm{Diag}\left(-\frac12\tr(M_1^2),\frac12\tr(M_2^2),\ldots,\frac12\tr(M_n^2),-(2\la+\mu_1^2),\ldots,-(2\la+\mu_r^2),-2\la,\ldots,-2\la   \right).
\end{equation}
To study this equation, we need matrix analysis of Hermitian square matrices (see \cite{horn2012matrix}). 
Let us recall one of the main theorems of this theory. A $m\times m$ Hermitian  matrix $A$ has real eigenvalues which can be ordered 
\[ \la_1(A)\leq\ldots\leq\la_m(A). \]
\begin{theo}[\cite{horn2012matrix}]
	\label{genth1}
	Let $A,B\in\mathcal{M}_m(\C)$ be two Hermitian matrices. Then for all $1\leq k \leq m$ :
	$$\lambda_k(A)+\lambda_1(B)\leq \lambda_k(A+B)\leq \lambda_k(A)+\lambda_m(B).$$
\end{theo}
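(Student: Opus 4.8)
The plan is to derive this from the Courant--Fischer min-max characterization of the eigenvalues of a Hermitian matrix, which is the standard route to Weyl-type perturbation inequalities. With the chosen ordering $\lambda_1(A)\leq\ldots\leq\lambda_m(A)$, I would take as known (it is an immediate consequence of the spectral theorem) that for any Hermitian $A\in\mathcal{M}_m(\C)$ and any $1\leq k\leq m$,
\[ \lambda_k(A)=\min_{\substack{V\subseteq\C^m\\ \dim V=k}}\;\max_{\substack{x\in V\\ \|x\|=1}}\langle Ax,x\rangle. \]
This is the only external ingredient required; one checks it by taking $V$ to be the span of the first $k$ eigenvectors for the upper estimate, and by a dimension count forcing any $k$-dimensional $V$ to meet the span of the last $m-k+1$ eigenvectors for the lower estimate.

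First I would establish the upper bound $\lambda_k(A+B)\leq\lambda_k(A)+\lambda_m(B)$. Let $V_0$ be a $k$-dimensional subspace realizing the minimum in the characterization of $\lambda_k(A)$. For every unit vector $x$ we have $\langle Bx,x\rangle\leq\lambda_m(B)$, since $\lambda_m(B)$ is the largest eigenvalue of $B$, whence $\langle(A+B)x,x\rangle\leq\langle Ax,x\rangle+\lambda_m(B)$. Feeding $V_0$ into the min-max formula for $A+B$ then gives
\[ \lambda_k(A+B)\leq\max_{\substack{x\in V_0\\ \|x\|=1}}\langle(A+B)x,x\rangle\leq\max_{\substack{x\in V_0\\ \|x\|=1}}\langle Ax,x\rangle+\lambda_m(B)=\lambda_k(A)+\lambda_m(B), \]
which is the claimed upper inequality.

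The lower bound then follows by a substitution rather than a fresh argument. Applying the upper bound already proved to the pair $(A+B,\,-B)$ in place of $(A,\,B)$, and using $\lambda_m(-B)=-\lambda_1(B)$, I obtain
\[ \lambda_k(A)=\lambda_k\big((A+B)+(-B)\big)\leq\lambda_k(A+B)+\lambda_m(-B)=\lambda_k(A+B)-\lambda_1(B), \]
which rearranges to $\lambda_k(A)+\lambda_1(B)\leq\lambda_k(A+B)$. I do not expect a genuine obstacle: once the min-max formula is in hand the proof is short. The only point demanding care is bookkeeping with the ordering convention---one must invoke the ``$\min\max$'' form (not the dual ``$\max\min$'' form) to bound $\lambda_k$ from above, and one must correctly track the sign reversal $\lambda_m(-B)=-\lambda_1(B)$ when passing to the lower bound.
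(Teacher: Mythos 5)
Your proof is correct: the Courant--Fischer min-max characterization gives the upper bound exactly as you argue, and the substitution $(A,B)\mapsto(A+B,-B)$ together with $\lambda_m(-B)=-\lambda_1(B)$ correctly yields the lower bound. Note that the paper itself gives no proof of this statement---it is Weyl's inequality, quoted directly from \cite{horn2012matrix}---and your argument is essentially the standard derivation found in that reference, so your proposal matches the intended (cited) approach.
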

Based on this theorem, the following lemma is a breakthrough in our study.
\begin{Le}
	\label{genlem0}
	Let $M_1,\dots,M_n$ be a family of skew-symmetric  $m\times m$ matrices with  $2\leq n\leq m$ and let $(v_1,\ldots,v_{m-n})$ be a family of nonpositive real numbers such that :
	\begin{equation}
	\label{geneq1}
	M_1^2-\sum_{l=2}^n M_l^2=\mathrm{Diag}\left(-\frac12\tr(M_1^2),\frac12\tr(M_2^2),\dots,\frac12\tr(M_n^2),v_1,\dots,v_{m-n}\right).
	\end{equation}
	Then $$(v_1,\ldots,v_{m-n})=(0,\ldots,0),  \; \lambda_1\left(\mathlarger\sum\limits_{l=2}^n M_l^2\right)=\mathlarger\sum\limits_{l=2}^n \lambda_1(M_l^2).$$ Moreover, for any $i\in\{2,\ldots,n\}$, $\mathrm{rank}(M_i)\leq2$.
\end{Le}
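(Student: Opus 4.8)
The plan is to recast \eqref{geneq1} in terms of positive semidefinite matrices and then trap a chain of eigenvalue inequalities so tightly that every inequality is forced to be an equality; each of the three conclusions will then drop out of one of these equalities. Throughout I set $P:=-M_1^2$ and $Q_l:=-M_l^2$ for $2\le l\le n$; since the $M_l$ are real skew-symmetric, these are symmetric positive semidefinite, and I write $R:=\sum_{l=2}^n Q_l\ge 0$, a nonempty sum because $n\ge 2$. Denoting by $\Delta$ the diagonal right-hand side of \eqref{geneq1}, the identity reads $M_1^2+R=\Delta$. The first scalar consequence comes from taking the trace: using $\tr(M_1^2)=-\tr P$ and $\tr(M_l^2)=-\tr Q_l$ gives $\tfrac32\big(\sum_{l=2}^n\tr Q_l-\tr P\big)=\sum_j v_j$, and since every $v_j\le 0$ this yields $\sum_{l=2}^n\tr Q_l\le\tr P$, with equality precisely when all $v_j=0$.

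Next I would record two elementary facts. Because $\Delta$ is diagonal with first entry $-\tfrac12\tr(M_1^2)=\tfrac12\tr P\ge 0$ and all remaining entries $\le 0$, its largest eigenvalue is $\lambda_m(\Delta)=\tfrac12\tr P$. Moreover, the nonzero eigenvalues of a real skew-symmetric matrix $M_l$ occur in conjugate pairs, so the top eigenvalue of $Q_l$ has multiplicity at least two whenever it is nonzero; hence $\lambda_m(Q_l)\le\tfrac12\tr Q_l$, with equality if and only if $\mathrm{rank}(M_l)\le 2$.

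The heart of the proof is then a single chain of inequalities in which Theorem \ref{genth1} is used twice: once as $\lambda_m(M_1^2+R)\le\lambda_m(M_1^2)+\lambda_m(R)$ (with $\lambda_m(M_1^2)\le 0$ since $M_1^2\le 0$), and once, iterated, as $\lambda_m(R)=\lambda_m\big(\sum_{l=2}^n Q_l\big)\le\sum_{l=2}^n\lambda_m(Q_l)$. Combined with the two facts and the trace bound above, this produces
\[
\tfrac12\tr P=\lambda_m(\Delta)\le\lambda_m(R)\le\sum_{l=2}^n\lambda_m(Q_l)\le\tfrac12\sum_{l=2}^n\tr Q_l\le\tfrac12\tr P,
\]
so every term is equal. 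Equality in the last step forces $\sum_j v_j=0$, whence $(v_1,\dots,v_{m-n})=(0,\dots,0)$; equality in $\sum_{l=2}^n\lambda_m(Q_l)\le\tfrac12\sum_{l=2}^n\tr Q_l$ forces $\lambda_m(Q_l)=\tfrac12\tr Q_l$ for every $l$, hence $\mathrm{rank}(M_l)\le 2$; and $\lambda_m(R)=\sum_{l=2}^n\lambda_m(Q_l)$ translates, via $\sum_{l=2}^n M_l^2=-R$ and $M_l^2=-Q_l$, into $\lambda_1\big(\sum_{l=2}^n M_l^2\big)=\sum_{l=2}^n\lambda_1(M_l^2)$.

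The main obstacle I anticipate is conceptual rather than computational: recognizing that the quantity one must squeeze is the \emph{top} eigenvalue $\lambda_m$, even though the statement is phrased through $\lambda_1$, and noticing that the trace identity of the first step is exactly what closes the chain back onto $\tfrac12\tr P$. Getting the sign bookkeeping right in that trace identity, and carefully justifying the rank characterization from the paired spectrum of a skew-symmetric matrix, are the delicate points where the argument could go wrong.
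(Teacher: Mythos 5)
Your proof is correct, and while it relies on the same central tool as the paper's proof --- the Weyl inequalities of Theorem \ref{genth1}, the trace of \eqref{geneq1}, the identification of $-\frac12\tr(M_1^2)$ as the top eigenvalue of the diagonal matrix, and the paired spectrum of skew-symmetric matrices --- it is organized in a genuinely different and more efficient way. The paper proceeds in two stages: it first proves $(v_1,\dots,v_{m-n})=(0,\dots,0)$ by itself, and to do so it splits into cases according to the parity of $m$; the odd case runs a chain ending in $\lambda_m(M_1^2)\le 0$, while the even case needs a separate, more delicate argument comparing the two intervals produced by the Weyl bounds \eqref{geneq7} and \eqref{geneq8} for $\lambda_m$ and $\lambda_{m-1}$. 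Only after $v_i=0$ is established does a second squeeze (the ``To conclude'' chain) yield the $\lambda_1$ identity and the rank bound. Your version replaces all of this by a single self-closing chain on the top eigenvalue of the positive semidefinite matrices $Q_l=-M_l^2$, namely $\frac12\tr P=\lambda_m(\Delta)\le\lambda_m(R)\le\sum_{l=2}^n\lambda_m(Q_l)\le\frac12\sum_{l=2}^n\tr Q_l\le\frac12\tr P$, so that every inequality is forced to be an equality and the three conclusions drop out simultaneously: the trace equality kills the $v_j$, the termwise equality $\lambda_m(Q_l)=\frac12\tr Q_l$ gives $\mathrm{rank}(M_l)\le 2$, and $\lambda_m(R)=\sum_l\lambda_m(Q_l)$ is the stated $\lambda_1$ identity after the sign flip $\lambda_1(-X)=-\lambda_m(X)$. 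What your organization buys is the complete elimination of the parity case analysis --- the most technical part of the paper's proof (indeed, your chain shows the parity distinction was never needed: the paper's ``odd'' argument only uses $\lambda_m(M_1^2)\le 0$, which always holds) --- whereas the paper's staged version gains nothing here that your chain does not already deliver.
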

\begin{proof} Denote by $M$ the right-hand side of equation \eqref{geneq1}. By taking the trace of  \eqref{geneq1} we get :
	\begin{equation}
	\label{geneq4}
	\mathrm{tr}({M}_1^2)-\sum_{l=2}^n\mathrm{tr}({M}_l^2)=\frac23\sum_{i=1}^{m-n} v_i\leq0.
	\end{equation}
	For $i=1,\ldots,n$, $M_i^2$ is the square of a skew-symmetric matrix so its eigenvalues are real nonpositive and satisfies
	\begin{equation}\label{lam}\lambda_{2k-1}({M}_i^2)=\lambda_{2k}({M}_i^2),\quad  k\in\left\{1,\ldots,\left[\frac{m}{2}\right]\right\}.\end{equation}  It is clear that $-\frac{1}{2}\mathrm{tr}({M}_1^2)$ is the only nonnegative eigenvalue of $M$ and therefore  $\lambda_m(M)=-\frac{1}{2}\mathrm{tr}({M}_1^2)$.
	Theorem \ref{genth1} applied to \eqref{geneq1} gives that :
	\begin{equation}
	\label{geneq7}
	\underbrace{\lambda_m(M)+\lambda_1\left(\sum_{l=2}^n {M}_l^2\right)}_a\leq \lambda_m({M}_1^2)\leq \underbrace{\lambda_m(M)+\lambda_m\left(\sum_{l=2}^n {M}_l^2\right)}_b.
	\end{equation}
	and 
	\begin{equation}
	\label{geneq8}
	\underbrace{\lambda_{m-1}(M)+\lambda_1\left(\sum_{l=2}^n {M}_l^2\right)}_c\leq \lambda_{m-1}({M}_1^2)\leq \underbrace{\lambda_{m-1}(M)+\lambda_m\left(\sum_{l=2}^n {M}_l^2\right)}_d.
	\end{equation}
	Suppose that $m$ is odd. In this case  $\lambda_m({M}_1^2)=0$ and, by applying Theorem  \ref{genth1} inductively and using \eqref{lam}, we get that :
	\[ \frac{1}{2}\sum_{l=2}^n \mathrm{tr}({M}_l^2)\leq \frac{1}{2}\sum_{l=2}^n(\lambda_1({M}_l^2)+\lambda_2({M}_l^2))=\sum_{l=2}^n\lambda_1({M}_l^2)\leq \lambda_1\left(\sum_{l=2}^n {M}_l^2\right).\]
	As a consequence of this inequality and the fact that $\la_m(M)=-\frac{1}{2}\mathrm{tr}({M}_1^2)$, we get
	$$-\frac{1}{2}\mathrm{tr}({M}_1^2)+\frac{1}{2}\sum_{l=2}^n\mathrm{tr}({M}_l^2)\leq \lambda_m(M)+\lambda_1\left(\sum_{l=2}^n {M}_l^2\right)\stackrel{\eqref{geneq7}}\leq \la_m(M_1^2)\leq0,$$
	This combined with \eqref{geneq4} gives that $(v_1,\ldots,v_{m-n})=(0,\ldots,0)$. Suppose now that  $m$ is even. In this case,
	$\lambda_{m-1}({M}_1^2)=\lambda_m({M}_1^2)$ and it follows from \eqref{geneq7} and \eqref{geneq8} that $[a,b]\cap[c,d]\neq\emptyset$. But, we have obviously  that $c\leq a$ and $d\leq b$ therefore $a\leq d$. Thus
	\begin{equation}
	\label{geneq5}
	\lambda_m(M)+\lambda_1\left(\sum_{l=2}^n {M}_l^2\right) \leq \lambda_{m-1}(M)+\lambda_m\left(\sum_{l=2}^n {M}_l^2\right).
	\end{equation}
	Since $\la_m(M)=-\frac12\tr(M_1^2)$ then by using \eqref{geneq4} we get that
	\[ \lambda_m(M)+\lambda_1\left(\sum_{l=2}^n {M}_l^2\right)=-\frac{1}{2}\sum_{l=2}^n\mathrm{tr}({M}_l^2)-\frac13\sum_{i=1}^{m-n} v_i+\lambda_1\left(\sum_{l=2}^n {M}_l^2\right).  \]
	On other hand, by applying	Theorem \ref{genth1} once more, we get $\lambda_m(\sum_{l=2}^n {M}_l^2)\leq \sum_{l=2}^n \lambda_m({M}_l^2) \leq 0$, moreover  $\lambda_{m-1}(M)\leq 0$,  so \eqref{geneq5} implies that :
	\begin{equation}
	\label{eqeigen1}
	-\frac{1}{2}\sum_{l=2}^n\mathrm{tr}({M}_l^2)-\frac13\sum_{i=1}^{m-n} v_i+\lambda_1\left(\sum_{l=2}^n {M}_l^2\right)\leq 0,
	\end{equation}
	Theorem \ref{genth1} also implies that  $\lambda_1\left(\mathlarger\sum\limits_{l=2}^n {M}_l^2\right)\geq \mathlarger\sum\limits_{l=2}^n \lambda_1({M}_l^2)$ and hence
	\begin{eqnarray*}
		-\frac{1}{2}\sum_{l=2}^n\mathrm{tr}({M}_l^2)-\frac13\sum_{i=1}^{m-n} v_i+\lambda_1\left(\sum_{l=2}^n {M}_l^2\right) & \geq & -\frac{1}{2}\sum_{l=2}^n\mathrm{tr}({M}_l^2)-\frac13\sum_{i=1}^{m-n} v_i+\sum_{l=2}^n\lambda_1\left( {M}_l^2\right)\\
		& \geq & -\frac{1}{2}\sum_{l=2}^n\sum_{k=1}^m\lambda_k({M}_l^2)-\frac13\sum_{i=1}^{m-n} v_i+\sum_{l=2}^n\lambda_1( {M}_l^2)\\
		&\stackrel{\eqref{lam}} \geq & -\sum_{l=2}^n\sum_{k=1}^{[\frac{m}{2}]}\lambda_{2k-1}({M}_l^2)-\frac13\sum_{i=1}^{m-n} v_i+\sum_{l=2}^n\lambda_1( {M}_l^2)\\
		& \geq & -\sum_{l=2}^n\sum_{k=2}^{[\frac{m}{2}]}\lambda_{2k-1}({M}_l^2)-\frac13\sum_{i=1}^{m-n} v_i \geq 0.
	\end{eqnarray*}
	
	Again we get that $v_i=0$ for all $1\leq i\leq m-n$.
	
	To conclude, without any assumption on $m$, equation \eqref{geneq7} gives 
	\begin{eqnarray*}
		0\geq\lambda_m(M)+\lambda_1\left(\sum_{l=2}^n {M}_l^2\right)&=&-\frac{1}{2}\mathrm{tr}({M}_1^2)+\lambda_1\left(\sum_{l=2}^n {M}_l^2\right)\\
		&=&-\frac{1}{2}\sum_{l=2}^n\mathrm{tr}({M}_l^2)+\lambda_1\left(\sum_{l=2}^n {M}_l^2\right)\\
		&=&\lambda_1\left(\sum_{l=2}^n {M}_l^2\right)-\frac{1}{2}\sum_{l=2}^n\sum_{k=1}^m \lambda_k({M}_l^2)\\
		&=&\lambda_1\left(\sum_{l=2}^n {M}_l^2\right)-\sum_{l=2}^n\lambda_1({M}_l^2)-\frac{1}{2}\sum_{l=2}^n\sum_{k=3}^m \lambda_k({M}_l^2)\geq 0\\
	\end{eqnarray*}
	which means that $\lambda_1(\sum_{l=2}^n {M}_l^2)=\sum_{l=2}^n\lambda_1({M}_l^2)$ and $\lambda_k({M}_l^2)=0$ for all $3\leq k\leq m$ and $2\leq l \leq n$. This completes the proof.
\end{proof}

If we apply this lemma to our study, we get that $\la=0$, $L=0$ and $(J_2,\ldots,J_n)$ have rank 2 and satisfy $\la_1(\sum_{i=2}^nJ_i^2)=\sum_{i=2}^n
\la_1(J_i^2)$. The following lemma will give us a precise description of the endomorphisms $(J_2,\ldots,J_n)$.

\begin{Le}\label{genlem1} Let $V$ be an $m$-dimensional Euclidean vector space and
	$K_1,\dots,K_n:V\longrightarrow V$ a family of skew-symmetric endomorphisms with $n<m$. Assume that $\mathrm{rank}(K_i)=2$, $\mathrm{tr}(K_i\circ K_j)=0$ for all $i\neq j$ and that
	$$\lambda_1\left(K\right)=\sum_{i=1}^n \lambda_1\left(K_i^2\right)\;\;\;\text{with}\;\;\;K:=\sum_{i=1}^n K_i^2.$$
	Then we can find an orthonormal basis $\{u_0,\dots,u_{n},v_1,\ldots,v_{m-n-1}\}$ such that for all $1\leq i,j \leq n$, $1\leq l\leq m-n-1$ :
	$$ K_i(u_0)=\alpha_i u_i,\;K_i(u_j)=-\de_{ij}\alpha_i u_0\;\;\;\text{and}\;\;\;K_i(v_l)=0.$$
	
\end{Le}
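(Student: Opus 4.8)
The plan is to exploit the very rigid normal form of a rank-$2$ skew-symmetric endomorphism and to extract, from the scalar equality in the hypothesis, a single vector $u_0$ that is common to all the $K_i$. First I would record the structure of each $K_i$: being skew-symmetric of rank $2$, its image $P_i:=\img(K_i)$ is a $2$-plane, $\ker K_i=P_i^\perp$, and $K_i^2=-\al_i^2\,\Pi_{P_i}$ for some $\al_i>0$, where $\Pi_{P_i}$ denotes the orthogonal projection onto $P_i$. In particular $K_i^2$ is negative semidefinite with eigenvalues $-\al_i^2$ (multiplicity $2$) and $0$ (multiplicity $m-2$), so $\lambda_1(K_i^2)=-\al_i^2$.

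The hard part, and the heart of the argument, is the second step: converting the equality $\lambda_1(K)=\sum_i\lambda_1(K_i^2)$ into the existence of a common eigenvector. I would take a unit eigenvector $u_0$ of $K=\sum_i K_i^2$ realizing $\lambda_1(K)$ and write
\[
\lambda_1(K)=\langle Ku_0,u_0\rangle=\sum_{i=1}^n\langle K_i^2u_0,u_0\rangle\ \geq\ \sum_{i=1}^n\lambda_1(K_i^2)=\lambda_1(K),
\]
using $\langle K_i^2u_0,u_0\rangle\geq\lambda_1(K_i^2)$ for each $i$ (the $k=1$ case of Theorem \ref{genth1}, equivalently the Rayleigh bound). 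Equality throughout forces $\langle K_i^2u_0,u_0\rangle=\lambda_1(K_i^2)$ for every $i$, and since $u_0$ is a unit vector this is precisely the equality case of the Rayleigh quotient, so $u_0$ is a minimal eigenvector of each $K_i^2$: $K_i^2u_0=-\al_i^2u_0$. Geometrically this says $u_0\in P_i$ for all $i$.

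Then I would set $u_i:=\al_i^{-1}K_iu_0$. Direct computation gives $\langle u_i,u_i\rangle=\al_i^{-2}\langle -K_i^2u_0,u_0\rangle=1$, $\langle u_i,u_0\rangle=\al_i^{-1}\langle K_iu_0,u_0\rangle=0$ by skew-symmetry, $K_iu_0=\al_iu_i$, and $K_iu_i=\al_i^{-1}K_i^2u_0=-\al_iu_0$; hence $P_i=\vect(u_0,u_i)$ and $K_i$ acts on it as the stated rotation. To get mutual orthogonality of the $u_i$ I would bring in the hypothesis $\tr(K_i\circ K_j)=0$: writing $K_i=\al_i(u_iu_0^{\top}-u_0u_i^{\top})$ and expanding, and using $\langle u_0,u_i\rangle=\langle u_0,u_j\rangle=0$, one finds $\tr(K_iK_j)=-2\al_i\al_j\langle u_i,u_j\rangle$, so the vanishing of $\tr(K_i\circ K_j)$ is equivalent to $\langle u_i,u_j\rangle=0$ for $i\neq j$. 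Thus $\{u_0,u_1,\dots,u_n\}$ is an orthonormal family, which fits in $V$ since $n+1\leq m$.

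Finally, for $j\neq i$ we have $u_j\perp u_0$ and $u_j\perp u_i$, so $u_j\in P_i^\perp=\ker K_i$ and $K_iu_j=0$; together with $K_iu_0=\al_iu_i$ and $K_iu_i=-\al_iu_0$ this gives exactly $K_i(u_j)=-\delta_{ij}\al_iu_0$. The common kernel is $\bigcap_iP_i^\perp=\vect(u_0,u_1,\dots,u_n)^\perp$, of dimension $m-n-1$; choosing any orthonormal basis $\{v_1,\dots,v_{m-n-1}\}$ of it yields $K_iv_l=0$ for all $i,l$ and completes $\{u_0,\dots,u_n\}$ to the desired orthonormal basis of $V$. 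The only genuinely delicate point is the equality-case extraction of the common vector $u_0$ in the second step; once it is in hand, everything else is bookkeeping with the rank-$2$ normal form.
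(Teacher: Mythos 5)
Your proof is correct and takes essentially the same approach as the paper's: extracting a common unit vector $u_0\in\bigcap_{i=1}^n\mathrm{Im}(K_i)$ from the equality case of the Rayleigh bound applied to a minimal eigenvector of $K$ (the paper performs this same equality-case argument by decomposing $u_0$ along each plane $\mathrm{Im}(K_i)$ and forcing the orthogonal components to vanish), then defining $u_i$ proportional to $K_i(u_0)$ and using $\tr(K_i\circ K_j)=0$ to obtain pairwise orthogonality. The differences are purely cosmetic, e.g.\ your outer-product computation of $\tr(K_iK_j)=-2\alpha_i\alpha_j\langle u_i,u_j\rangle$ versus the paper's adapted-basis trace computation.
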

\begin{proof} Consider $E:=\ker(K-\la_1(K)\mathrm{Id}_V)$ and for $i=1,\ldots,n$, denote $E_i:=\mathrm{Im}(K_i)$. Note that $E_i$ is a plan and there exists a $\al_i\in\R\setminus\{0\}$ such that for any $u\in E_i$, $K_i^2(u)=-\al_i^2 u$ and $\la_1(K_i^2)=-\al_i^2$. 
We claim that $E\subset\bigcap_{i=1}^n E_i$.  Indeed, let $u\in E$ and for each $i=1,\ldots,n$ choose an orthonormal basis $(e_i,f_i)$ of $E_i$ and write 
	$$u=\langle u,e_i\rangle e_i+\langle u,f_i\rangle f_i+v_i\esp v_i\in E_i^\perp.$$  Since $\la_1(K)=-\al_1^2-\ldots-\al_n^2$, we get
	\[ -\sum_{i=1}^n\al_i^2\langle u,u\rangle=\langle K^2(u),u\rangle=
	\sum_{i=1}^n\langle K_i^2(u),u\rangle. \]	
	But $K_i^2(u)=-\al_i^2(\langle u,e_i\rangle e_i+\langle u,f_i\rangle f_i)$ and hence
	\[ \langle K_i^2(u),u\rangle=-\al_i^2\left(\langle u,e_i\rangle^2 +\langle u,f_i\rangle^2 \right). \]So
	\[ 0=\sum_{i=1}^n\al_i^2(\langle u,u\rangle-\langle u,e_i\rangle^2 -\langle u,f_i\rangle^2)=\sum_{i=1}^n\al_i^2\langle v_i,v_i\rangle=0. \]	
	Thus $v_1,\ldots,v_n=0$ and the claim follows.\\\\ Choose $u_0\in E$  such that $\langle u_0,u_0\rangle=1$. Then clearly $(u_0,K_i(u_0))$ is an orthogonal basis of $E_i$. Complete this basis to get an orthonormal basis $(u_0,u_i,f_1,\ldots,f_{m-2})$ of $V$ with $u_i=\frac{1}{|K_i(u_0)|}K_i(u_0)$.
	We have $K_i(f_k)=0$ for $k=1,\ldots,m-2$ and hence for $i,j\in\{1,\ldots,n\}$ with $i\not=j$
	\begin{align*}
	0&=\tr(K_i\circ K_j)\\
	&=-\langle K_j(u_0),K_i(u_0)\rangle -\langle K_j(u_i),K_i(u_i)\rangle\\
	&=-\langle K_j(u_0),K_i(u_0)\rangle+\frac{\al_i^2}{|K_i(u_0)|}\langle K_j(u_i),u_0\rangle\\
	&=-\left(1+\frac{\al_i^2}{|K_i(u_0)|^2}\right)\langle K_j(u_0),K_i(u_0)\rangle.
	\end{align*}So the family $(u_0,K_1(u_0),\ldots,K_n(u_0))$ is orthogonal, we orthonormalize it and complete it to get the desired basis.
\end{proof}

The relevance of the following lemma will appear later.
\begin{Le}
	\label{Lemmaimp}
	Consider the following system of matrix equations on $\R^{2k}$ :
	\begin{equation}
	\label{systempric}
	\left\{%
	\begin{array}{l r}
	K^2=P^{-1}A P+A& \\
	\alpha K =AP-P^{-1}A &
	\end{array}
	\right.
	\end{equation}
	where $K$ is an invertible skew-symmetric matrix, $P$ an orthogonal matrix, $A=\mathrm{diag}(-\alpha_1^2,\dots,-\alpha_{2k}^2)$ with $\alpha_i\neq 0$ and $\alpha=\pm\sqrt{\alpha_1^2+\dots+\alpha_{2k}^2}$. Then $k=1$, in which case we get that :
	\begin{equation}
	\label{eqLemmaimp}
	A=\left(\begin{array}{l r}
	-\alpha_1^2 & 0\\0 & -\alpha_2^2\end{array}\right) ,\;\;\;K=\left(\begin{array}{l r}
	0 & \epsilon\sqrt{\alpha_1^2+\alpha_2^2}\\-\epsilon\sqrt{\alpha_1^2+\alpha_2^2} & 0\end{array}\right)\;\;\;\text{and}\;\;\;P=\left(\begin{array}{l r}
	0 & \mp\epsilon\\\pm\epsilon & 0\end{array}\right),\;\;\;\epsilon=\pm 1.
	\end{equation}
	$$$$	
\end{Le}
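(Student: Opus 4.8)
The plan is to avoid analysing the two matrix equations of \eqref{systempric} separately and instead to distill a single scalar identity from them, by pairing a trace computation coming from the first equation with a Frobenius-norm computation coming from the second. A pointwise nonnegativity argument will then collapse the size of the matrices and force $k=1$; unlike Lemmas \ref{genlem0} and \ref{genlem1}, this does not seem to require the Weyl inequalities of Theorem \ref{genth1}.

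First I would take the trace of $K^2=P^{-1}AP+A$. Since $\tr(P^{-1}AP)=\tr(A)$, this yields $\tr(K^2)=2\tr(A)=-2\alpha^2$, and because $K$ is skew-symmetric, $\|K\|_F^2=-\tr(K^2)=2\alpha^2$. Next I would compute the Frobenius norm of both sides of $\alpha K=AP-P^{-1}A$. The left-hand side contributes $\alpha^2\|K\|_F^2=2\alpha^4$. For the right-hand side, observe that $M:=AP-P^{-1}A$ is itself skew-symmetric, since $M^T=P^{-1}A-AP=-M$ (using $A^T=A$ and $P^T=P^{-1}$), so $\|M\|_F^2=-\tr(M^2)$. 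Expanding $M^2=(AP)^2-A^2-P^{-1}A^2P+(P^{-1}A)^2$, and using $\tr(P^{-1}A^2P)=\tr(A^2)$ together with the identity $\tr((AP)^2)=\tr((P^{-1}A)^2)$ (both sides are transposes of one another), one obtains $\|M\|_F^2=2\tr(A^2)-2\tr((P^{-1}A)^2)$. Equating the two computations gives the key relation
\[ \tr\big((P^{-1}A)^2\big)=\tr(A^2)-\alpha^4. \]

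The decisive step is to read this relation in coordinates. Writing $P=(p_{ij})$ and using that $A$ is diagonal, one finds $\tr((P^{-1}A)^2)=\sum_{i,j}\alpha_i^2\alpha_j^2\,p_{ij}p_{ji}$, while $\tr(A^2)-\alpha^4=-\sum_{i\neq j}\alpha_i^2\alpha_j^2$. Separating the diagonal from the off-diagonal contribution, the key relation becomes
\[ \sum_{i}\alpha_i^4\,p_{ii}^2+\sum_{i\neq j}\alpha_i^2\alpha_j^2\big(p_{ij}p_{ji}+1\big)=0. \]
Because $P$ is orthogonal every entry satisfies $|p_{ij}|\leq1$, hence $p_{ij}p_{ji}\geq-1$ and every summand above is nonnegative; therefore each summand must vanish. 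This gives $p_{ii}=0$ for all $i$ (recall $\alpha_i\neq0$) and $p_{ij}p_{ji}=-1$ for all $i\neq j$. The latter forces $|p_{ij}|=1$ for every off-diagonal entry, which is incompatible with the rows of $P$ having unit norm as soon as $k\geq2$. Hence $k=1$. I expect this collapse to the single scalar identity, followed by the pointwise nonnegativity, to be the only genuinely delicate point; everything else is bookkeeping.

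Finally, for $k=1$ the conditions $p_{11}=p_{22}=0$, orthogonality of $P$, and $p_{12}p_{21}=-1$ leave precisely the family $P=\begin{pmatrix}0&\mp\epsilon\\ \pm\epsilon&0\end{pmatrix}$ with $\epsilon=\pm1$. Substituting into $\alpha K=AP-P^{-1}A$ determines $K$ explicitly and gives $K^2=-(\alpha_1^2+\alpha_2^2)I$, consistent with $\tr(K^2)=-2\alpha^2$; one then checks directly that $K^2=P^{-1}AP+A$ holds automatically, which reproduces the stated normal forms \eqref{eqLemmaimp}.
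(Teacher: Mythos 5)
Your proof is correct, and it is genuinely different from the one in the paper — and noticeably shorter. The paper argues by contradiction in several stages: it first proves that $K^2$ commutes with $A$ (via the identity $A^{-1}K^2(K^2+\alpha^2\mathrm{Id})=(P+P^{-1})A(P+P^{-1})$ and an eigenspace analysis of $K^2$, with a separate treatment of the case $k=2$), then simultaneously diagonalizes $A$ and $P^{-1}AP$, introduces a permutation $\sigma$ with $Av_i=-\alpha_i^2v_i$ and $P^{-1}AP\,v_i=-\alpha_{\sigma(i)}^2v_i$, rules out fixed points of $\sigma$ by a trace estimate, and finally reaches a contradiction from $\sum_{j\neq i,\sigma(i)}\alpha_i^2\alpha_j^2=0$ using Cauchy--Schwarz on $\langle P^2v_i,v_i\rangle$. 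You bypass all of this structural work: taking the trace of the first equation ($\tr(K^2)=-2\alpha^2$) and the Frobenius norm of the second yields the single scalar identity $\tr\bigl((P^{-1}A)^2\bigr)=\tr(A^2)-\alpha^4$, which in coordinates becomes $\sum_i\alpha_i^4p_{ii}^2+\sum_{i\neq j}\alpha_i^2\alpha_j^2(p_{ij}p_{ji}+1)=0$; the entrywise bound $|p_{ij}|\leq 1$ for orthogonal matrices makes every summand nonnegative, forcing $p_{ii}=0$ and $|p_{ij}|=1$ for $i\neq j$, which is incompatible with unit rows once $2k\geq 4$. I checked the individual steps ($M=AP-P^{-1}A$ is skew-symmetric, $\tr((AP)^2)=\tr((P^{-1}A)^2)$ by transposition, the coordinate expansion, and the $k=1$ endgame, where your constraints $p_{11}=p_{22}=0$, $p_{12}p_{21}=-1$ pin down $P$ and then $K$, with the first equation holding automatically) and they are all sound. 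What each approach buys: yours is more elementary and uniform in $k$ — no commutation lemma, no case split between $k=2$ and $k>2$, no permutation combinatorics — and it even recovers the normal form \eqref{eqLemmaimp} from the same identity rather than from a separate trigonometric parametrization (which in the paper implicitly assumes $\det P=1$, whereas your constraints derive it); the paper's longer route does produce intermediate structural facts (commutation of $A$ with $K^2$, the permutation structure of $P$), but none of these are used elsewhere, so your argument could replace it with no loss.
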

\begin{proof}
	We prove the Lemma by contradiction and assume that $(K,A,P)$ is a solution of \eqref{systempric} and  $k>1$. To get a contradiction, we prove first  that $K^2$ and $A$ commute and hence $A$ and $P^{-1}AP$ commute as well.
	
	Let  $\la_1<\ldots<\la_r<0$ be  the different eigenvalues of $K^2$ and $E_1,\ldots, E_r$ the corresponding vector eigenspaces. Since $K$ is skew-symmetric invertible and $\tr(K^2)=-2\al^2$, we have 
	\begin{equation}\label{k} \R^{2k}=E_1\oplus\ldots\oplus E_r, \dim E_i=2p_i\esp 2\sum_{i=1}^rp_i\la_i=-2\al^2. \end{equation}
	According to \eqref{systempric}, $P^{-1}AP+A$ and $AP-P^{-1}A$ commutes and hence
	\[ A(P+P^{-1})A=P^{-1}A(P+P^{-1})AP. \]
	Moreover the first equation of system \eqref{systempric} implies that 
	$$K^4=P^{-1}A^2P+A^2+AP^{-1}AP+P^{-1}APA$$
	and the second equation of \eqref{systempric} along with the preceding remarks give that :
	\begin{eqnarray*}
		\alpha^2K^2&=&APAP+P^{-1}AP^{-1}A-A^2-P^{-1}A^2P\\
		&=&APAP+P^{-1}AP^{-1}A+AP^{-1}AP+P^{-1}APA-K^4\\
		&=&(AP+AP^{-1})AP+P^{-1}A(P^{-1}A+PA)-K^4\\
		&=&A(P+P^{-1})AP+P^{-1}A(P^{-1}+P)A-K^4\\
		&=&A(P+P^{-1})A(P+P^{-1})-K^4.
	\end{eqnarray*}
	Therefore we get that $K^2(K^2+\alpha^2\mathrm{Id})=A(P+P^{-1})A(P+P^{-1})$ which leads to :
	\begin{equation}
	\label{comm1}
	A^{-1}K^2(K^2+\alpha^2\mathrm{Id})=(P+P^{-1})A(P+P^{-1}).
	\end{equation}
	But $P^{-1}=P^t$ and  the endomorphism at the right hand side of the previous equality is symmetric. This implies that  $A^{-1}$ and therefore $A$ commutes with $K^2(K^2+\alpha^2\mathrm{Id})$.\\
	W show now that $A$ commutes with $K^2$. If $K^2$ is proportional to $\mathrm{Id}$ this is obviously true. Suppose that $K^2$ has at least two distinct eigenvalues, i.e.,  $r\geq2$.
	For any $i,j\in\{1,\ldots,r\}$ and for any $u\in E_i,v\in E_j$, we have
	\begin{align*}
	\langle AK^2(K^2+\alpha^2\mathrm{Id})(v),w\rangle&=\la_i(\la_i+\al^2)\langle Au,v\rangle\\
	&=\langle K^2(K^2+\alpha^2\mathrm{Id})A(v),w\rangle\\
	&=\langle K^2(K^2+\alpha^2\mathrm{Id})w,A(v)\rangle\\
	&=\la_j(\la_j+\al^2)\langle Au,v\rangle.\end{align*}
	Thus, 
	\[ (\la_i-\la_j)(\la_i+\la_j+\al^2)\langle Au,v\rangle=0. \]
	But from \eqref{k}, we get
	\[ 2(\la_i+\la_j+\al^2)=-2(p_i-1)\la_i-2(p_j-1)\la_j-2\sum_{l\neq i,l\neq j}p_l\la_l\leq0. \]
	If $k>2$ then the last two   relations implies that if $i\not=j$ then $\langle A(E_i),E_j\rangle=0$ and hence $A(E_i)=E_i$ for $i=1,\ldots,r$. So $A$ commutes with $K^2$.
	
	If $k=2$ then $r=2$,  $\dim E_1=\dim E_2=2$ and $\la_1+\la_2=-\al^2$. From $\R^{2k}=E_1\oplus E_2$ one can deduce easily  that $K^2(K^2+\al^2\mathrm{Id})=-\la_1\la_2\mathrm{Id}$ and by replacing in \eqref{comm1} we get 
	$$A(P+P^{-1})=-\la_1\la_2(P+P^{-1})^{-1}A^{-1}.$$
	Now for any $u\in\R^{2k}$ we get that :
	$$ 0\geq \langle A(P+P^{-1})(u),(P+P^{-1})(u)\rangle=-\la_1\la_2\langle (P+P^{-1})^{-1}A^{-1}(u),(P+P^{-1})(u)\rangle=-\la_1\la_2\langle A^{-1}(u),u\rangle\geq 0,$$
	so $\langle A^{-1}(u),u\rangle=0$ which is impossible.

	In conclusion  $A$ commutes with $K^2$ and hence $A$ commutes with $P^{-1}AP$ so  
	there exists an orthonormal basis $\{v_1,\dots,v_{2k}\}$  of $\R^{2k}$ in which  both $A$ and $P^{-1}AP$ are diagonal.
	 For any $i\in\{1,\ldots,2k \}$
	$$Av_i=-\alpha_i^2 v_i\esp P^{-1}AP(v_i)=-\alpha_{\sigma(i)}^2 v_i$$
	for some permutation $\sigma$ of $\{1,\dots,2k\}$. The second equation of  \eqref{systempric} gives that, for any $i\in\{1,\ldots,2k \}$, 
	$$ \alpha K(v_i)=AP(v_i)-P^{-1}A(v_i)=-\alpha_{\sigma(i)}^2P(v_i)+\alpha_i^2P^{-1}(v_i).$$
	Thus 
	\begin{equation}
	\label{eqphew}
	\alpha^2\langle K(v_i),K(v_i)\rangle=\alpha_{\sigma(i)}^4+\alpha_i^4-2\alpha_{\sigma(i)}^2\alpha_i^2\langle P^2(v_i),v_i\rangle.
	\end{equation}
	First assume that $\sigma(i)=i$ for some $i\in\{1,\dots,2k\}$. It follows  from the first equation of  \eqref{systempric}  that $-2\alpha_i^2$ is an eigenvalue of $K^2$ so it must have multiplicity greater than $2$ and since $k>1$ this leads to $\tr(K^2)<-4\alpha_i^2$. On the other hand, equation \eqref{eqphew} and the first equation of  \eqref{systempric} imply $$\alpha^2\langle K(v_i),K(v_i)\rangle=2\alpha_i^4(1-\langle P^2(v_i),v_i\rangle)\esp
	-\langle K(v_i),K(v_i)\rangle=\langle K^2(v_i),v_i\rangle=-2\alpha_i^2.$$
	Combining these equations we obtain that $\alpha^2=\alpha_i^2(1-\langle P^2(v_i),v_i\rangle)$ and the Cauchy-Schwarz inequality $|\langle P^2(v_i),v_i\rangle|\leq \lVert v_i\lVert\;\lVert P^2v_i\lVert=1$ implies that $0\leq 1-\langle P^2(v_i),v_i\rangle\leq 2$ which in turn gives that $0\leq \alpha^2\leq 2\alpha_i^2$. Finally using that $\tr(K^2)=-2\alpha^2$ we conclude that $-4\alpha_i^2\leq\tr(K^2)$, and we get a  contradiction.
	Thus $\sigma(i)\neq i$ for all $i=1,\dots,2k$.
	
	From $\di\sum_{i=1}^{2k}\langle K(v_i),K(v_i)\rangle=-\tr(K^2)=2\alpha^2$ and equation \eqref{eqphew} we get :
	$$2\alpha^4=2\sum_{i=1}^{2k}\alpha_i^4-2\sum_{i=1}^{2k}\alpha_{\sigma(i)}^2\alpha_i^2\langle P^2(v_i),v_i\rangle.$$
	Now 
	\begin{align*}
	\alpha^4-\di\sum_{i=1}^{2k}\alpha_i^4&=(\al_1^2+\ldots+\al_{2k}^2)^2-\di\sum_{i=1}^{2k}\alpha_i^4\\
	&=\di\sum_{i\neq j}\alpha_i^2\alpha_j^2\\&=\di\sum_{i=1}^{2k}\alpha_i^2\alpha_{\sigma(i)}^2+
	\sum_{j\neq i,j\neq\sigma(i)}\alpha_i^2\alpha_j^2.\end{align*} So we obtain that :
	$$ 0\leq \sum_{j\neq i,\sigma(i)}\alpha_i^2\alpha_j^2=-\sum_{i=1}^{2k}\alpha_i^2\alpha_{\sigma(i)}^2(\langle P^2(v_i),v_i\rangle+1)\leq 0,$$
	the right hand side of the previous equality is negative as a consequence of the Cauchy-Schwarz inequality $|\langle P^2(v_i),v_i\rangle|\leq \lVert v_i\lVert\;\lVert P^2v_i\lVert=1$ which implies that $0\leq \langle P^2(v_i),v_i\rangle+1\leq 2$.\\\\ Thus $\di\sum_{j\neq i,\sigma(i)}\alpha_i^2\alpha_j^2=0$, but this contradicts the fact that $A$ is invertible. We conclude that $k=1$ and in this case we can   put :
	$$A=\left(\begin{array}{l r}
	-\alpha_1^2 & 0\\0 & -\alpha_2^2\end{array}\right) ,\;\;\;K=\left(\begin{array}{l r}
	0 & \beta\\-\beta & 0\end{array}\right)\;\;\;\text{and}\;\;\;P=\left(\begin{array}{l r}
	\cos(\theta) & -\sin(\theta)\\\sin(\theta) & \cos(\theta)\end{array}\right).$$
	We get that system \eqref{systempric} is equivalent to :
	$$	\left\{%
	\begin{array}{l r}
	\beta^2-\alpha_1^2-(\alpha_1^2\cos^2(\theta)+\alpha_2^2\sin^2(\theta))=0 \\\\
	\beta^2-\alpha_2^2-(\alpha_1^2\sin^2(\theta)+\alpha_2^2\cos^2(\theta))=0 \\\\
	\cos\theta\sin\theta(\alpha_2^2-\alpha_1^2)=0\\
	\pm\beta\sqrt{\alpha_1^2+\alpha_2^2}-(\alpha_1^2+\alpha_2^2)\sin\theta=0.
	\end{array}
	\right.$$
	By summing over the first two equations in the previous system and replacing in the last equation we obtain that $\beta=\epsilon\sqrt{\alpha_1^2+\alpha_2^2}$, $\sin\theta=\pm\epsilon$ and $\cos\theta=0$ with $\epsilon=\pm 1$, which ends the proof.
\end{proof}

We are now in possession of all the necessary ingredients to characterize $\om$-quasi Einstein Lorentzian 2-step nilpotent Lie algebras of type 1 as a key step toward the proof of Theorem \ref{Thprin1}. 
\begin{theo}
	\label{mainth1}
	Let $(\g,[\;,\;],\metric)$ be a Lorentzian $2$-step nilpotent Lie algebra and assume that $\mathrm{Z}(\g)=[\g,\g]$ is non-degenerate Lorentzian and let $\omega\in\mathrm{Z}^2(\g,\R)$. Then $\g$ is $\omega$-quasi Einstein of type $1$ with positive Einstein constant $\lambda$ if and only if $\lambda=0$ and, up to an isomorphism, $(\g,[\;,\;],\metric,\om)$ has one of the following forms :
	\begin{enumerate}
		\item  $\dim\g=5$  and there exists an orthonormal basis $\{e_1,e_2,u_1,u_2,u_3\}$ of $\g$ with $\langle e_1,e_1\rangle=-1$ such that the non vanishing Lie brackets and $\om$-products are given by :
		\begin{equation}
		\label{bracketstruct1dim6}
		[u_1,u_2]=\alpha e_2,\;[u_2,u_3]=\pm\alpha e_1,\;\omega(e_2,u_3)=\e\alpha,\;\omega(e_1,u_1)=\mp\e\alpha,\;\;\;\alpha\neq 0,\e=\pm1.
		\end{equation}
		
		\item $\dim\g=6$  and  there exists an orthonormal basis $\{e_1,e_2,e_3,u_1,u_2,u_3\}$ of $\g$ such that $\langle e_1,e_1\rangle=-1$ and the non vanishing Lie brackets and $\om$-products are given by :
		\begin{equation}
		\label{bracketstruct}
		\begin{cases}
		[u_1,u_2]=\alpha_2 e_2,\;[u_1,u_3]=\alpha_3 e_3,\;[u_2,u_3]=\epsilon\alpha e_1,\;\\\omega(e_2,u_3)=\mp\epsilon\alpha_2,\;\omega(e_3,u_2)=\pm\epsilon\alpha_3,\;\omega(e_1,u_1)=\pm\alpha,\end{cases}
		\end{equation}
		where $\alpha_2,\alpha_3\neq 0$, $\epsilon=\pm 1$ and $\alpha=\sqrt{\alpha_2^2+\alpha_3^2}$.
	\end{enumerate}
\end{theo}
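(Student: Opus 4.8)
The plan is to prove both implications, dispatching the (easy) sufficiency by a direct computation and reserving the real work for necessity, where the three preparatory lemmas are assembled. For sufficiency I would take each model \eqref{bracketstruct1dim6} and \eqref{bracketstruct}, read off the $\om$-structure endomorphism $S$ from \eqref{car} and the Lie structure endomorphisms $J_i$ from the brackets, and simply verify the conditions of Definition \ref{DefOmegaQuasi} with $\la=0$: that $Z(\g)=[\g,\g]$ is non-degenerate Lorentzian, that $\ker\om\cap Z(\g)=\{0\}$, and that $\Ric_\g=\frac12 D$ together with $\tr(S^2)=0$. In the given orthonormal bases this is a finite check.

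The substance is the converse. Assuming $\g$ is $\om$-quasi Einstein of type $1$ with $\la\geq0$, I would start from the system \eqref{meq1} and the matrix identity \eqref{M} already established above. Applying Lemma \ref{genlem0} to \eqref{M} forces at once $\la=0$, $L=0$, $\mathrm{rank}(J_i)\leq2$ for $i\geq2$, and the eigenvalue equality $\la_1(\sum_{i\geq2}J_i^2)=\sum_{i\geq2}\la_1(J_i^2)$. The injectivity of $B$ (which comes from $\ker\om\cap Z(\g)=\{0\}$ via \eqref{rigid}) rules out $J_i=0$, so each $J_i$ ($i\geq2$) has rank exactly $2$; this is precisely the hypothesis of Lemma \ref{genlem1}, which I would apply to $(J_2,\dots,J_n)$ to get an orthonormal basis $\{u_0,w_2,\dots,w_n,v_1,\dots\}$ of $[\g,\g]^\perp$ in which each $J_i$ couples only $u_0$ and $w_i$. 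At this stage $Q:=-\sum_{i\geq2}J_i^2$ becomes explicit and, using \eqref{f} together with the trace condition $\tr(BB^*)=0$, so does the full eigendata of $B^*B$ and $BB^*$, with $\tr(J_1^2)=-2\alpha^2$ where $\alpha^2=\sum_{i\geq2}\alpha_i^2$.

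The last equation of \eqref{meq1} then reads $J_1^2=-Q-BB^*$. The key observation is that $Q+BB^*$, being $-J_1^2$, must be positive semidefinite with its nonzero eigenvalues paired; since the largest eigenvalue of $Q$ is $\alpha^2$ (attained on $u_0$) while $BB^*$ carries the single negative eigenvalue $-\alpha^2$, comparing these forces $f_1=Be_1/|Be_1|$ to be proportional to $u_0$, whence $J_1u_0=0$. Restricting (equivalently, $B$-transporting) the identity $J_1^2=-Q-BB^*$ to the $(n-1)$-dimensional spacelike block and writing $BB^*$ there through the orthogonal change of basis $P$ relating the $w$-basis to the $f$-basis reproduces the first equation of Lemma \ref{Lemmaimp} with $A=\mathrm{diag}(-\alpha_2^2,\dots,-\alpha_n^2)$ and $K$ the restriction of $J_1$; the $2$-cocycle condition, linking the bracket data $J_1$ with the $\om$-data $B$, supplies the second equation $\alpha K=AP-P^{-1}A$. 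Lemma \ref{Lemmaimp} then yields $k=1$, i.e. $n=3$, which is Case $(ii)$. The low-dimensional situation $n=2$, where this block is one-dimensional and Lemma \ref{Lemmaimp} does not apply, I would settle by solving $J_1^2=-Q-BB^*$ directly, obtaining Case $(i)$. Tracking the vectors annihilated by all the $J_i$ (the $v$-directions) shows there is exactly one of them when $n=2$ and none when $n=3$, so $m=\dim[\g,\g]^\perp=3$ in both cases and no larger dimension survives.

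It then remains to translate the normal-form data back into Lie brackets and $\om$-products: the nonzero brackets pair $u_0$ with the $w_i$ as $[u_0,w_i]=\alpha_i e_i$, together with the single $J_1$-bracket, while \eqref{car} and $Sf_i=-B^*f_i$ give the $\om$-products, reproducing \eqref{bracketstruct1dim6} and \eqref{bracketstruct} after fixing signs and the relation $\alpha=\sqrt{\alpha_2^2+\alpha_3^2}$. I expect the middle step to be the main obstacle: isolating the correct $J_1$-invariant subspace, deriving the second matrix equation of Lemma \ref{Lemmaimp} from the cocycle identity, and bookkeeping the Lorentzian signs and the $v$-directions carefully enough to pin down $m=3$. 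Once that reduction is in place, everything downstream is routine.
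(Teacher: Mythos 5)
Your outline tracks the paper's own proof faithfully through its first half: the application of Lemma \ref{genlem0} to \eqref{M} (giving $\la=0$, $L=0$, $\mathrm{rank}\,J_i\le 2$ and the eigenvalue equality), the application of Lemma \ref{genlem1}, and the eigenvalue comparison forcing $f_1=\pm u_0$ and $J_1u_0=0$ (a correct variant of the paper's computation via \eqref{whtvr}). The gap is in the passage to Lemma \ref{Lemmaimp}. For the matrix $P$ relating the $f$-basis to the $u$-basis to be \emph{orthogonal}, and for the ``restriction'' $K$ of $J_1$ to the spacelike block to be a well-defined, invertible skew-symmetric matrix, you must first know that $\mathrm{Im}(B)$ lies inside $\mathrm{span}\{u_0,u_2,\dots,u_n\}$, i.e.\ that there are no $v$-directions at all: $m=n$. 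A priori both $J_1u_i$ and $B(e_i)$ have components along the $v_k$, the matrix of inner products between the two families is rectangular rather than orthogonal, and neither equation of \eqref{systempric} can even be written down. You invoke the dimension count only \emph{after} Lemma \ref{Lemmaimp} (``tracking the $v$-directions shows \dots\ $m=3$ in both cases''), which is circular; moreover the criterion you propose is wrong: the $v_k$ are annihilated only by $J_2,\dots,J_n$, not by all the $J_i$ --- in the five-dimensional case the surviving $v$ satisfies $J_1v\neq 0$, and a vector killed by \emph{all} the $J_i$ lies in $Z(\g)\cap[\g,\g]^\perp=\{0\}$, so your tracking argument could never produce ``exactly one'' of them.

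The missing ingredient is the paper's explicit determination of $J_1$ from the cocycle identity evaluated at $u_0$, namely $J_1=\pm\al^{-1}\sum_{i\ge 2}\al_i\,u_i\wedge B(e_i)$ (equation \eqref{whtvr2}). This is what does all the remaining work: computing $\langle J_1^2v_k,v_k\rangle$ in two ways with it gives $\sum_{i\ge2}\bigl(1-\al_i^2/\al^2\bigr)\langle B(e_i),v_k\rangle^2=0$, hence for $n\ge3$ every $\langle B(e_i),v_k\rangle=0$, so $v_k\in\ker BB^*\cap\ker J$, then $J_1^2v_k=0$, $J_1v_k=0$, and $v_k\in Z(\g)\cap[\g,\g]^\perp=\{0\}$, i.e.\ $m=n$; for $n=2$ it bounds $\mathrm{rank}\,J_1\le2$, which together with the invertibility of $J_1$ on $f_1^\perp$ (which you also never establish, and which is what forces $m$ odd and $m\le 2n-1$) pins $m=3$. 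Only after this reduction do $K$, $P$, $A$ become the legitimate data of Lemma \ref{Lemmaimp}, with $K$ invertible, and only then does the lemma yield $n=m=3$. So the architecture of your proposal is the right one, but as ordered it does not go through: the dimension-pinning step must precede Lemma \ref{Lemmaimp}, and its proof requires the cocycle formula for $J_1$, not a count of vectors annihilated by the $J_i$.
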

\begin{proof}
	We keep the notations from the beginning of section \ref{section4}. The endomorphisms $(J_2,\ldots,J_n)$ have been shown to satisfy the hypothesis of Lemma \ref{genlem1}, therefore we can find an orthonormal basis $(u_1,u_2,\ldots,u_n,v_1,\ldots,v_{m-n})$ of $[\G,\G]^\perp$ and $(\al_2,\ldots,\al_n)\in\R^n$  such that, for all $2\leq i,j \leq n$ and all $1\leq k\leq m-n$,
	\[ J_i(u_1)=\al_i u_i,\; J_i(u_j)=-\de_{ij}\al_i u_1,\;\al_i\not=0\esp J_i(v_k)=0. \]
	Put $J=\sum_{i=2}^n J_i^2$, it is clear that for all $2\leq i \leq n$, $1\leq k\leq m-n$,
	\begin{equation}\label{r} J(u_1)=-(\al_2^2+\ldots+\al_n^2)u_1,\; J(u_i)=-\al_i^2 u_i, J(v_k)=0,\;\tr(J_1^2)=-2(\al_2^2+\ldots+\al_n^2)\esp  \tr(J_i^2)=-2\al_i^2. \end{equation}
	 Consider $\B_2=(f_1,\ldots,f_n):=\left(\frac{B(e_1)}{|B(e_1)|},\ldots,\frac{B(e_n)}{|B(e_n)|}\right)$. By virtue of equation \eqref{M}, we get that for any $i=2,\ldots,n$ and any $v\in\{f_1,\ldots,f_n\}^\perp$
	 \begin{equation}
	 \label{whtvr}
	 J_1^2(f_1)=J(f_1)-\frac12\tr(J_1^2)f_1,\;J_1^2(f_i)=J(f_i)+\frac12\tr(J_i^2)f_i \esp J_1^2(v)-J(v)=0. 
	 \end{equation}
    Since $\la_1(J)=\frac12\tr(J_1^2)$,  
	we deduce that
	\[ \langle J_1(f_1),J_1(f_1)\rangle=-\langle J(f_1),f_1\rangle+\la_1(J)\leq0  \]and hence 
	\[ J_1(f_1)=0\esp J(f_1)=\la_1(J)f_1. \]But \eqref{r} shows that the multiplicity of $\la_1(J)$ is equal to one and hence $f_1=\pm u_1$. Let us show that the restriction of $J_1$ to $f_1^\perp$ is invertible. We have from \eqref{meq1} that
	\[ J_1^2=J-BB^* \]and from \eqref{f} the restriction of $BB^*$ to $f_1^\perp$ is positive so if $u\in f_1^\perp$ and $J_1u=0$ we get
	\[ \sum_{i=2}^n\langle J_iu,J_iu\rangle+\langle BB^*(u),u\rangle=0 \]
	therefore $u\in\cap_{i=1}^n\ker J_i=Z(\G)=[\G,\G]$ and hence $u=0$. It follows that $J_1:f_1^\perp\too f_1^\perp$ is invertible and thus $m$ is odd. In view of the last equation of \eqref{whtvr} and the fact that $f_1=\pm u_1$, we obtain that $J_1^2(\{f_1,\dots,f_n\}^\perp)\subset\vect\{u_2,\dots,u_n\}$, the preceding remark then leads to $m-n\leq n-1$ thus $m\leq 2n-1$.\\\\
For convenience we set $w_i:=B(e_i)$ for $i=1,\ldots,n$. 	From \eqref{f} we get
	\[ \langle w_i,w_i\rangle=-\frac12\tr(J_i^2)\esp \langle w_i,w_j\rangle=0, i\not=j. \]

	So
	\begin{equation}
	\label{whtvr1}
	BB^*(x)=-(\al_2^2+\dots+\al_n^2)\langle x,u_1\rangle u_1+\sum_{i=2}^n\langle x,w_i\rangle w_i.
	\end{equation}
	The fact that $B$ defines a 2-cocycle is equivalent to
	\[ \sum_{i=1}^{n}\left( \langle J_i u,v\rangle w_i+\langle w_i,u\rangle J_i v-\langle w_i,v\rangle J_i u \right)=0,\quad u,v\in[\G,\G]^\perp. \]
	If we apply this  equation to $u=u_1$ we get
	\[ \langle w_1,u_1\rangle J_1 v=-\sum_{i=2}^{n}\left( \al_i\langle u_i ,v\rangle w_i-\al_i\langle w_i,v\rangle u_i  \right). \]
	From the definition of $w_1$ we get that $\langle w_1,u_1\rangle=\pm\sqrt{\alpha_2^2+\dots+\alpha_n^2}$ and therefore the previous equation gives that
	\begin{equation}
	\label{whtvr2}
	J_1=\pm\frac1{\sqrt{\al_2^2+\ldots+\al_n^2}}\sum_{i=2}^n\al_i u_i\wedge w_i.
	\end{equation}
	Actually this is equivalent to $B$ being a $2$-cocycle.
The expression of $BB^*$ given in \eqref{whtvr1} leads to
\begin{equation}
\label{whtvr3}
J_1^2-\sum_{i=2}^{q}J_i^2=(\al_2^2+\ldots+\al_n^2)\langle x,u_1\rangle u_1-\sum_{i=2}^n\langle .,w_i\rangle w_i.
\end{equation}
	Put $a=\pm\frac1{\sqrt{\al_2^2+\ldots+\al_n^2}}$. Equation \eqref{whtvr2} on the other hand gives that 
	\begin{align}
	\label{whtvr4}
	J_1w_l&=a\al_l^3 u_l-a\sum_{i=2}^n\al_i\langle u_i,w_l\rangle w_i,\nonumber\\
	J_1u_l&=-a\al_l w_l+a\sum_{i=2}^n\al_i\langle w_i,u_l\rangle u_i,\nonumber\\
	J_1v_k&=a\sum_{i=2}^n\al_i\langle w_i,v_k\rangle u_i,
	\end{align}
Now using \eqref{whtvr3} and then \eqref{whtvr4}, it is straightforward to check that
$$
		\langle J_1^2v_k,v_k\rangle=	-\sum_{l=2}^n\langle w_i,v_k\rangle^2
		=a\sum_{i=2}^n\al_i\langle w_i,v_k\rangle\langle J_1u_i,v_k\rangle
		=-a^2\sum_{l=2}^n\al_i^2\langle w_i,v_k\rangle^2.
$$	So we conclude that
	\[ \sum_{l=2}^n(1-a^2\al_i^2)\langle w_i,v_k\rangle^2=0. \]
	Thus either $n=2$ or $n\geq3$ and $\langle w_i,v_k\rangle=0$ for $i=1,\ldots,n$ and $v_k=1,\ldots,m-n$. So we get that either $n=2$ or $n\geq3$ and $m=n$.
	
	For $n=2$, we have $m=3$,  $(e_1,e_2)$ is an orthonormal basis of $[\G,\G]$ with $\langle e_1,e_1\rangle=-1$, $(u_1,u_2,v)$ an orthonormal basis of $[\G,\G]^\perp$,   $B(e_1)=a u_1$, $B(e_2)= bv$,
	\[ J_2=\left(\begin{array}{ccc}0&-\al&0\\\al&0&0\\0&0&0 \end{array}\right)\esp J_1=bu_2\wedge v=\left(\begin{array}{ccc}0&0&0\\0&0&b\\0&-b&0 \end{array}\right)\esp a^2=b^2=\al^2. \]
	This automatically leads to \eqref{bracketstruct1dim6}.
	For $n\geq3$, we have $n=m=2k+1$. Recall that
	$$[u,v]=\left\{\begin{array}{l c r}\mathlarger\sum\limits_{i=1}^n\langle J_i(u),v\rangle e_i,& &u,v\in[\g,\g]^\perp\\
	0,& &\text{otherwise}\end{array}\right.,\;\;\;\omega(u,v)=\left\{\begin{array}{l c r}\langle B(u),v\rangle,& &u\in[\g,\g],\;v\in[\g,\g]^\perp \\
	0,& &\text{otherwise}\end{array}\right.$$
	From what have been shown, the only Lie brackets of $\g$ that do not automatically vanish are 
	$$[u_1,u_i]=\langle J_i(u_1),u_i\rangle e_i=\alpha_i e_i\;\;\;\text{and}\;\;\;[u_i,u_j]=\langle J_1(u_i),u_j\rangle e_1:=\beta_{ij} e_1,$$
	for $2\leq i,j\leq n$, moreover since $J_1$ is invertible on $u_1^\perp$ it follows that $K:=(\beta_{ij})_{i,j}$ is a skew-symmetric invertible matrix. On the other hand, put $\hat{P}(f_i):=u_i$ for $2\leq i \leq m$ then $\hat{P}:=(\hat{p}_{ij})_{i,j}$ is an orthogonal matrix and it is straightforward to see that $\langle B(e_i),u_j\rangle=\langle B(e_i),\hat{P}(f_j)\rangle =\epsilon_i\hat{p}_{ji}\alpha_i$with $\epsilon_i=\pm 1$, it is clear that $P=(\epsilon_j\hat{p}_{ij})_{i,j}$ is an orthogonal matrix as well. Next since $f_1=\pm u_1$ we get that : 
	$$\langle B(e_1),u_1\rangle=\pm\sqrt{\alpha_2^2+\dots+\alpha_n^2}.$$ 
	Finally in these notations notice that $J_1^2-\sum_{i=2}^n J_i^2=-BB^*$ is equivalent to $K^2=P^{-1}AP+A$ with $A=\mathrm{diag}(-\alpha_2^2,\dots,-\alpha_n^2)$ and the cocycle condition $\oint\langle B([u,v]),w\rangle=0$ is equivalent to $\pm\alpha K=AP-P^{-1}A$ where $\alpha=\sqrt{\alpha_2^2+\dots+\alpha_n^2}$. This exactly the situation of Lemma \ref{Lemmaimp} and therefore $k=1$, i.e $n=m=3$ and thus $\dim\g=6$, furthermore in view of \eqref{eqLemmaimp} we get that the Lie algebra structure of $\g$ is given by \eqref{bracketstruct}. This ends the proof.
\end{proof}
Following the discussion of section \ref{section3} we get as a consequence of the preceding Theorem that a Lorentzian $3$-step nilpotent Lie algebras $(\h,\metric)$ with non-degenerate $1$-dimensional center is Einstein if and only if it is Ricci-flat and has one of the following forms :

	\begin{enumerate}
		\item Either $\dim\h=6$ in which case $\dim[\h,\h]=\mathrm{codim}[\h,\h]=3$ and there exists an orthonormal basis $\{x,e_1,e_2,u_1,u_2,u_3\}$ of $\h$ with $\langle e_1,e_1\rangle=-1$ such that the Lie algebra structure is given by :
		\begin{equation}
		\label{bracketstruct3dim6}
		[u_1,u_2]=\alpha e_2,\;[u_2,u_3]=\pm\alpha e_1,\;[e_2,u_3]=\alpha x,\;[e_1,u_1]=\mp\alpha x,\;\;\;\alpha\neq 0.
		\end{equation}
		\begin{equation}
		\label{bracketstruct4dim6}
		[u_1,u_2]=\alpha e_2,\;[u_2,u_3]=\pm\alpha e_1,\;[e_2,u_3]=-\alpha x,\;[e_1,u_1]=\pm\alpha x,\;\;\;\alpha\neq 0.
		\end{equation}
		\item $\dim\h=7$ in which case $\dim[\h,\h]=\mathrm{codim}[\h,\h]+1=4$. Moreover there exists an orthonormal basis $\{x,e_1,e_2,e_3,u_1,u_2,u_3\}$ of $\h$ such that $\langle e_1,e_1\rangle=-1$ and in which the Lie algebra structure is given by :
		\begin{equation}
		\label{bracketstruct3step}
		[u_1,u_2]=\alpha_2 e_2 ,\;[u_1,u_3]=\alpha_3 e_3,\;[u_2,u_3]=\epsilon\alpha e_1,\;[e_2,u_3]=\mp\epsilon\alpha_2x,\; [e_3,u_2]=\pm\epsilon\alpha_3x,\;[e_1,u_1]=\pm\alpha x
		\end{equation}
		where $\alpha=\sqrt{\alpha_2^2+\alpha_3^2}$.
	\end{enumerate}
\begin{proof}[Proof of Main Theorem.]In case $1$, the Lie algebra structure $[\;,\;]$ of $\h$ has one of the forms given by either \eqref{bracketstruct3dim6} or \eqref{bracketstruct4dim6}. It is clear that \eqref{bracketstruct4dim6} can be obtained from \eqref{bracketstruct3dim6} simply by replacing $u_3$ with $-u_3$, for this reason it suffices to treat the case where $\h$ is given by \eqref{bracketstruct3dim6}. Put :
	$$f_1=u_2,\;f_2=u_3+u_1,\;f_3=u_3-u_1,\;f_4=\pm \alpha e_1-\alpha e_2,\;f_5=\pm \alpha e_1+\alpha e_2,\;f_6=2\alpha^2x.$$
	Then we can easily see that :
	$$[f_1,f_2]=f_4,\;[f_1,f_3]=f_5,\;[f_2,f_4]=f_6,\;[f_3,f_5]=-f_6,$$
	$$[f_2,f_3]=[f_1,f_4]=[f_1,f_5]=[f_2,f_5]=[f_3,f_4]=[f_4,f_5]=[f_i,f_6]=0.$$
	Thus $\h\simeq\mathrm{L}_{6,19}(-1)$ and the metric $\metric$ is represented in the basis $\{f_1,\dots,f_6\}$ of $\h$ by the expression \eqref{6dimmetric1}. For case $2$, when $\h$ is given by \eqref{bracketstruct3step} we can put :
		$$f_1:=u_1,\;f_2:=u_2,f_3:=u_3,f_4:=\epsilon\sqrt{\alpha_2^2+\alpha_3^2}e_1,\;f_5:=\alpha_2 e_2,\;f_6=\alpha_3 e_3,\;f_7:=\pm\epsilon(\alpha_2^2+\alpha_3^2),$$
	then the Lie algebra structure of $\h$ is given by $\eqref{structdim7}$ with $r=\frac{\alpha_2^2}{\alpha_2^2+\alpha_3^2}$. Moreover if we set $a=\alpha_2^2+\alpha_3^2$ then we get that $\metric$ is given by \eqref{structmetric7}.
	\end{proof}
	We end our paper by some examples of Einstein Lorentzian nilpotent Lie algebras with non-degenerate center of dimension greater that one, the goal is to illustrate that such Lie algebras do occur even in the $3$-step nilpotent case. This gives motivation for a future investigation.
	
	\begin{exem}
		Let $\h$ be the $8$-dimensional nilpotent Lie algebra with Lie bracket $[\;,\;]$ given in a basis $\mathbb{B}=\{e_1,\dots,e_8\}$ by :
		$$\begin{cases}
		[e_1,e_2]=-4\sqrt{3}e_3,\;[e_1,e_3]=\sqrt{\dfrac{5}{2}}e_4,\;[e_1,e_4]=-2\sqrt{3}e_8,\;[e_1,e_5]=3\sqrt{\dfrac{7}{2}}e_6,\;\\
		[e_1,e_6]=-4\sqrt{2}e_7,\;[e_2,e_3]=-\sqrt{\dfrac{5}{2}}e_5,\;[e_2,e_4]=-3\sqrt{\dfrac{7}{2}}e_6,\;[e_2,e_5]=-2\sqrt{3}e_7,\vspace{.1in}\\
		[e_2,e_6]=-4\sqrt{2}e_8,\;[e_3,e_4]=-\sqrt{21}e_7,\;[e_3,e_5]=-\sqrt{21}e_8.
		\end{cases}$$
		One can define a Lorentzian inner product $\metric$ on $\h$ by requiring $\mathbb{B}$ to be an orthonormal basis with $\langle e_6,e_6\rangle=-1$. Then it is easy to see that $\mathrm{Z}(\h)=\vect\{e_7,e_8\}$ hence non-degenerate with respect to $\metric$. Moreover a straightforward computation shows that $(\h,\metric)$ is Einstein with nonvanishing scalar curvature. This example was first given in \cite{Dconti1}. 
	\end{exem}
	\begin{exem}
		Let $\metric$ be a Lorentzian metric on $\R^{7}$ and $\{e_1,\dots,e_{7}\}$ an orthonormal basis with respect to $\metric$ such that $\langle e_1,e_1\rangle=-1$. Define the Lie bracket $[\;,\;]$ by setting :	
		$$\begin{cases}
		[e_1,e_3]=\sqrt{2}e_7,\;[e_2,e_4]=\sqrt{2}e_7,\;[e_4,e_5]=-e_1,\;[e_4,e_6]=-e_1,\vspace{.1in}\\
		[e_3,e_5]=-e_2,\;[e_3,e_6]=-e_2.
		\end{cases}$$
		Put $\h:=(\R^{10},[\;,\;])$, then it is straightforward to check that $(\h,\metric)$ is a Ricci-flat $3$-step nilpotent Lie algebra with $\mathrm{Z}(\h)=\vect\{e_7,e_5-e_6\}$, therefore $\h$ has non-degenerate center.
	\end{exem}
	
	\begin{exem}
		Let $\metric$ be a Lorentzian metric on $\R^{10}$ and $\{e_1,\dots,e_{10}\}$ an orthonormal basis with respect to $\metric$ such that $\langle e_5,e_5\rangle=-1$. Choose $p,r\in\R$ such that $p,r\neq 0$ and define on $\R^{10}$ the Lie bracket $[\;,\;]$ given by :	
		$$\begin{cases}
		[e_1,e_3]=-\sqrt{p^2+r^2}e_5,\;[e_1,e_4]=-\sqrt{p^2+r^2}e_6,\;[e_2,e_4]=-\sqrt{p^2+r^2}e_5,\;[e_2,e_3]=-\sqrt{p^2+r^2}e_6,\vspace{.1in}\\
		[e_5,e_1]=pe_7,\;[e_5,e_2]=pe_8,\;[e_5,e_3]=re_9,\;[e_5,e_4]=re_{10}\vspace{.1in}\\
		[e_6,e_1]=pe_8,\;[e_6,e_2]=pe_7,\;[e_6,e_3]=re_{10},\;[e_6,e_4]=re_9.
		\end{cases}$$
		Put $\h:=(\R^{10},[\;,\;])$, then it is straightforward to check that $(\h,\metric)$ is a Ricci-flat $3$-step nilpotent Lie algebra with $\mathrm{Z}(\h)=\vect\{e_7,e_8,e_9,e_{10}\}$, therefore $\h$ has non-degenerate center.
	\end{exem}

\end{document}